\documentclass[a4paper]{amsart}

\usepackage{verbatim,amsmath,amssymb,enumitem,amsthm}
\usepackage[displaymath,pagewise]{lineno} %for line numbers switch*,
\usepackage{mathtools,xcolor}

\newtheorem{thm}{Theorem} 

\newtheorem{prop}[thm]{Proposition}
\newtheorem{lem}[thm]{Lemma}
\newtheorem{cor}[thm]{Corollary}
\newtheorem{ex}[thm]{Example}
\newtheorem{rem}[thm]{Remark}

\theoremstyle{remark}
\newtheorem{remark}[thm]{Remark}        % Numbered along with thm
        % Numbered along with thm

\theoremstyle{plain} 
\newcommand{\thistheoremname}{}
\newtheorem*{genericthm*}{\thistheoremname}
\newenvironment{namedthm*}[1]
{\renewcommand{\thistheoremname}{#1}%
	\begin{genericthm*}}
	{\end{genericthm*}}

%\newtheoremstyle{named}{}{}{\itshape}{}{\bfseries}{.}{.5em}{\thmnote{#3's }#1}
%\theoremstyle{named}
%\newtheorem*{namedtheorem}{Theorem}

\numberwithin{equation}{section} \numberwithin{thm}{section}

\newcommand*\patchAmsMathEnvironmentForLineno[1]{%
	\expandafter\let\csname old#1\expandafter\endcsname\csname #1\endcsname
	\expandafter\let\csname oldend#1\expandafter\endcsname\csname end#1\endcsname
	\renewenvironment{#1}%
	{\linenomath\csname old#1\endcsname}%
	{\csname oldend#1\endcsname\endlinenomath}}%
\newcommand*\patchBothAmsMathEnvironmentsForLineno[1]{%
	\patchAmsMathEnvironmentForLineno{#1}%
	\patchAmsMathEnvironmentForLineno{#1*}}%
\AtBeginDocument{%
	\patchBothAmsMathEnvironmentsForLineno{equation}%
	\patchBothAmsMathEnvironmentsForLineno{align}%
	\patchBothAmsMathEnvironmentsForLineno{flalign}%
	\patchBothAmsMathEnvironmentsForLineno{alignat}%
	\patchBothAmsMathEnvironmentsForLineno{gather}%
	\patchBothAmsMathEnvironmentsForLineno{multline}%
}

\newcommand{\vf}{\varphi}
\newcommand{\cal}{\mathcal}

\newcommand{\dist}{{\rm dist}\,}

\newcommand{\R}{{\mathbb R}}

\newcommand{\ep}{\varepsilon}

\newcommand{\reach}{{\rm reach}\,}

\newcommand{\grad}{\operatorname{grad}}

\newcommand{\inter}{\mathrm{int}}

\def\en{\mathbb N}
\def\er{\mathbb R}

\def\C{\mathcal C}

\newcommand{\graph}{\operatorname{graph}}

\newcommand{\spn}{\operatorname{span}}

\def\halfsq{\hbox{\kern1pt\vrule height 7pt\vrule width6pt height 0.4pt depth0pt\kern1pt}}
\def\ihalfsq{\hbox{\kern1pt \vrule width6pt height 0.4pt depth0pt
		\vrule height 7pt \kern1pt}}

\begin{document}

\title{On sets in $\R^d$ with DC distance function}
\author{Du\v san Pokorn\'y}\author{Lud\v ek Zaj\'i\v cek}

\thanks{The research was supported by GA\v CR~18-11058S}

\begin{abstract}  
	We study closed sets  $F \subset \R^d$ whose distance function $d_F\coloneqq \dist(\cdot,F)$ is DC
	(i.e., is the difference of two convex functions on $\R^d$). Our main result asserts that if $F \subset \R^2$
	is a graph of a DC function $g:\R\to \R$, then $F$ has the above property. If $d>1$, the same holds
	 if $g:\R^{d-1}\to \R$ is semiconcave, however the case of a general DC function $g$ remains open.
\end{abstract}

\email{dpokorny@karlin.mff.cuni.cz}
\email{zajicek@karlin.mff.cuni.cz}

\keywords{DC function, Distance function, Set of positive reach, Semiconcave function}
\subjclass[2010]{26B25}
\date{\today}
\maketitle

\section{Introduction}

Let  $F \neq \varnothing$ be a closed subset of $\R^d$ and let $d_F\coloneqq  \dist(\cdot,F)$ be its distance
 function. Recall that a function on $\R^d$ is called DC, if it is the difference of two convex functions. It is  well-known   (see, e.g., \cite[p. 976]{BB}) that
 \begin{equation}\label{odist}
 \text{ the function $(d_F)^2$ is DC but $d_F$ need not be DC.}
\end{equation}
 However, the distance function of some interesting special $F\subset \R^d$
 is DC; it is true for example for  $F$ from Federer's class of sets with positive reach, see \eqref{prdd}.

Our article was motivated by \cite{BB} and by the following question which naturally arises
 in the theory of WDC sets (see  \cite[Question 2, p. 829]{FPR} and \cite[10.4.3]{Fu2}).
\medskip

{\bf Question.} \ 
 Is  $d_F$ a DC function if $F$ is a graph of a $DC$ function $g: \R^{d-1} \to \R$?

\medskip
Note that WDC sets  form a substantial generalization of sets
 with positive reach and still admit the definition of curvature measures (see \cite{PR} or \cite{Fu2}) and $F$ as in Question is a natural example of a WDC set in $\R^d$.

Our main result (Theorem \ref{hlav}) gives the affirmative answer to Question in the case $d=2$;
 the case $d>2$ remains open. However, known results relatively easily imply that the answer
 is positive if $g$ in Question is semiconcave (Corollary \ref{grsemi}).

In \cite{PZ} we show that our main result has some interesting consequences for WDC subsets of $\R^2$, in particular that these sets have DC distance functions.

In Section 2 we recall some notation and needed facts about DC functions. In Section 3 we prove
 our main result (Theorem \ref{hlav}). 
In last Section 4, we prove a number of further results on the system of sets in $\R^d$ which have DC distance function, including Corollary \ref{grsemi} mentioned above. 

We were not able to prove a satisfactory complete characterisation of sets $F\subset \er^2$ with DC distance function, but we believe that our methods and results should lead to such a characterisation. However, in our opinion, the case of $F\subset \er^d$, $d\geq 3$, needs some new ideas.

\section{Preliminaries}
%\subsection{Basic definitions}\label{basic}
 In any vector space $V$, we use the symbol $0$ for the zero element. 
We denote by $B(x,r)$ ($U(x,r)$) the closed (open) ball with centre $x$ and radius $r$.
 The boundary and the interior of a set $M$ are denoted by $\partial M$ and $\inter M$, respectively. A mapping is called $K$-Lipschitz if it is Lipschitz with a (not necessarily minimal) constant $K\geq 0$.

In the Euclidean space $\R^d$,  the norm is denoted by $|\cdot|$ and  the scalar product  by $\langle \cdot,\cdot\rangle$. By $S^{d-1}$ we denote the unit sphere in $\R^d$.

If $x,y\in\R^d$, the symbol $[x,y]$ denotes the closed segment (possibly degenerate). If also 
 $x \neq y$, then  $l(x,y)$  denotes the line
	 joining $x$ and $y$.

The distance function from a set $A\subset \R^d$ is $d_A\coloneqq  \dist(\cdot,A)$ and 
 the metric projection of $z\in \R^d$ to $A$ is   
 $\Pi_A(z)\coloneqq \{ a\in A:\, \dist(z,A)=|z-a|\}$.

  If $f$ is defined in $\R^d$, we use the notation
$f'_+(x,v)$ for the one sided directional derivative of $f$ at $x$ in direction $v$.

\medskip
%\subsection{DC functions}\label{dc}

 Let $f$ be a real function defined on an open convex set $C \subset \R^d$. Then we say
	 that $f$ is a {\it DC function}, if it is the difference of two convex functions. Special DC
	 functions are semiconvex and semiconcave functions. Namely, $f$ is a {\it semiconvex} (resp.
	 {\it semiconcave}) function, if  there exist $a>0$ and a convex function $g$ on $C$ such that
	$$   f(x)= g(x)- a \|x\|^2\ \ \ (\text{resp.}\ \  f(x)= a \|x\|^2 - g(x)),\ \ \ x \in C.$$

We will use the following well-known properties of DC functions.

\begin{lem}\label{vldc}
Let $C$ be an open convex subset of $\R^d$. Then the following assertions hold.
\begin{enumerate}
\item[(i)]
	If $f: C\to \R$ and $g: C\to \R$ are DC, then (for each $a\in \R$, $b\in \R$) the functions $|f|$, $af + bg$, 
	 $\max(f,g)$ and $\min(f,g)$ are DC.
	\item[(ii)]
	Each locally DC  function  $f:C \to \R$  is DC. 
		\item[(iii)]	Each  DC  function  $f:C \to \R$  is 
  Lipschitz on each compact convex set $Z\subset C$.
	\item[(iv)]
Let $f_i: C \to \R$, $i=1,\dots,m$, be DC functions. Let $f: C \to \R$ be a continuous function
 such that $f(x) \in \{f_1(x),\dots,f_m(x)\}$ for each $x \in C$. Then $f$ is DC on $C$.
 	\item[(v)] Each $\C^2$ function $f:C\to\er$ is DC.
\end{enumerate}
\end{lem}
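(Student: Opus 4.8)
The assertions (i), (iii) and (v) are routine, so the plan is to dispose of them quickly and concentrate on (ii) and (iv). For (i), write $f=f_1-f_2$ and $g=g_1-g_2$ with $f_i,g_i$ convex. Then $af+bg$ is DC because multiplication by a nonnegative constant preserves convexity while a negative factor merely swaps minuend and subtrahend; the one identity worth recording is
\[
\max(f,g)=\max(f_1+g_2,\,g_1+f_2)-(f_2+g_2),
\]
whose right-hand side is a maximum of two convex functions minus a convex function, hence DC, and then $\min(f,g)=-\max(-f,-g)$ and $|f|=\max(f,-f)$ are DC as well. For (iii) I would invoke the classical fact that a finite convex function on an open convex set is locally Lipschitz and, by a thickening-and-boundedness argument, Lipschitz on every compact convex $Z\subset C$; the difference of two such functions is Lipschitz on $Z$.

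For (v) the plan is to argue locally: on a ball with $\cl{B(x_0,\rho)}\subset C$ the Hessian of $f$ is bounded, say by $a$, so $x\mapsto f(x)+a|x|^2$ has positive semidefinite Hessian and is convex there, whence $f=(f+a|x|^2)-a|x|^2$ is DC on that ball and $f$ is locally DC. Both (v) and much of the rest then rest on (ii), the localization theorem: a function DC in a neighbourhood of each point of the open convex set $C$ is DC on $C$. This is the one assertion I would take from the literature (Hartman), since passing from many local convex decompositions to a single global one is not a formal manipulation; alternatively one proves it by exhausting $C$ with compact convex sets and controlling the successive decompositions by means of the Lipschitz estimate in (iii).

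The substantive statement is (iv), and I expect the main difficulty there. By (ii) it suffices to show $f$ is DC near a fixed $x_0\in C$. First I would introduce a common subtrahend: with $f_i=a_i-b_i$ ($a_i,b_i$ convex) and $q:=\sum_i b_i$, each $f_i+q$ is convex, so after replacing $f$ by $f+q$ we may assume every $f_i$ is convex. Since DC functions are continuous, in a small ball $U\ni x_0$ only the indices $j$ with $f_j(x_0)=f(x_0)$ can ever be selected, the others satisfying $f_j\neq f$ throughout $U$; so we may assume all $f_i$ agree at $x_0$. The structural idea I would then exploit is that a continuous selection cannot switch its active index except on a coincidence set $\{f_i=f_j\}$: on each connected component of the complement of these sets $f$ equals a single $f_i$. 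In the two-function case this lets one write $f=f_2+\psi$ with $\psi$ continuous, $\psi\in\{0,\,f_1-f_2\}$, and $\psi$ equal to $f_1-f_2$ on a union of connected components of $\{f_1\neq f_2\}$ and $0$ elsewhere. The genuine obstacle is to show that such a $\psi$ --- a DC function ``switched off'' on a clopen-in-$\{f_1\neq f_2\}$ piece of its support --- is again DC, and then to climb from $m=2$ to general $m$; the delicacy is that the $f_i$ may cross infinitely often near $x_0$ and their coincidence sets may be highly irregular, so any argument counting switches is hopeless and one must instead produce the convex decomposition directly (or appeal to the known selection result). This is the step I expect to absorb essentially all the work.
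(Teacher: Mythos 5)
Your proposal is correct and takes essentially the same route as the paper: (i), (iii) and (v) are handled by routine arguments (with (v) reduced, exactly as in the paper, to local DC-ness plus (ii)), (ii) is taken from Hartman, and (iv) ultimately rests on the known mixing/selection result --- the paper cites precisely this as the ``Mixing lemma'' of Vesel\'y and Zaj\'{\i}\v{c}ek. Your preliminary reductions for (iv) (common convex subtrahend, restriction to the indices active at $x_0$) are sound but not needed once that lemma is invoked, and you correctly identify that the remaining core of (iv) is a genuine theorem rather than a formal manipulation, which is exactly why the paper, too, defers it to the literature.
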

\begin{proof}
Property (i) follows easily from definitions, see e.g. \cite[p. 84]{Tuy}. Property (ii) was proved in \cite{H}.
 Property (iii) easily follows from the local Lipschitzness of  convex functions. Assertion (iv) is a special case of \cite[Lemma 4.8.]{VeZa} (``Mixing lemma'').
 To prove (v) observe that (e.g. by \cite[Prposition~1.1.3~(d)]{CS}) each $C^2$ function is locally semiconcave and therefore locally DC, hence, DC by (ii).
\end{proof}

By well-known properties of convex and concave functions, we easily obtain that each locally DC function $f$ on an open set $U \subset \R^d$ has all one-sided directional derivatives finite and
\begin{equation}\label{zlose}  
 g_+'(x,v) + g_+'(x,-v) \leq 0,\ \ x \in U, v\in \R^d,\ \ \ \text{if}\ \ g\ \ \text{is locally semiconcave on}\ \ U.
\end{equation}

%First note that the distance function $d_A$ and the  metric projection $\Pi_A$ were defined in
%Subsection~\ref{basic}.

Recall that if $\varnothing \neq A\subset \R^d$ is closed, then $d_A$ need not be DC; however (see, e.g., \cite[Proposition 2.2.2]{CS}), 
\begin{equation}\label{loksem}
\text{$d_A $ is locally semiconcave (and so locally DC) on $\R^d \setminus A$.}
\end{equation}

\section{Main result}\label{graf}

In the proof of Theorem \ref{hlav} below we will use the following simple ``concave mixing lemma''.
\begin{lem}\label{comix}
Let $U \subset \R^d$ be an open convex set and let $\gamma: U \to \R$  have finite one-sided directional
 derivatives $\gamma_+'(x,v)$, ($x \in U, \ v\in \R^d$).  Suppose that 
\begin{equation}\label{nezlo}
   \gamma_+'(x,v) + \gamma_+'(x,-v) \leq 0,\ \ x \in U, v\in \R^d,
\end{equation}
 and that
\begin{multline}\label{pokrc}
\text{ $\graph \gamma$ is covered by graphs of a finite number}\\ 
\text{of concave functions defined  on $U$.}
  \end{multline}
  Then $\gamma$ is a concave function.
\end{lem}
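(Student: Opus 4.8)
The plan is to show that $\gamma$ is concave by verifying that it lies above all its chords, reducing the multi-dimensional statement to the one-dimensional situation along each segment and there exploiting the covering hypothesis together with the derivative condition \eqref{nezlo}.

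First I would fix two points $x,y \in U$ and restrict $\gamma$ to the segment $[x,y]$; concavity of $\gamma$ on $U$ is equivalent to concavity of each such restriction, so it suffices to treat the case $d=1$ with $U$ an open interval $I$. The hypotheses pass to the restriction: the one-sided derivatives of $t\mapsto \gamma(x+t(y-x))$ exist and are finite, inequality \eqref{nezlo} becomes the statement that the right derivative never exceeds the left derivative (equivalently $\gamma'_+(t) \le \gamma'_-(t)$ after identifying $\gamma'_+(x,-v)=-\gamma'_-$ in the scalar variable), and the graph of the restriction is still covered by the graphs of finitely many concave functions $\varphi_1,\dots,\varphi_m$ on $I$.

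Next, working on the interval $I$, I would argue that the derivative condition forces $\gamma$ to be continuous and, more importantly, to have a one-sided derivative that is nonincreasing in a suitable weak sense, which is the hallmark of concavity. The natural route is to fix a point $t_0 \in I$ and show that $\gamma$ coincides locally with whichever $\varphi_i$ passes through $(t_0,\gamma(t_0))$: since each $\varphi_i$ is concave it has monotone one-sided derivatives, and the covering means that near $t_0$ the value $\gamma(t)$ equals $\varphi_i(t)$ for at least one $i$ at each point. The condition \eqref{nezlo} is exactly what rules out the ``convex kink'' that could otherwise appear when $\gamma$ switches from one $\varphi_i$ to another: at a switching point a downward corner (making $\gamma$ locally concave-compatible) is permitted, but an upward corner (a local minimum of slope) would violate $\gamma'_+(t_0) + \gamma'_-(t_0)\le 0$ in the directional formulation. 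I would make this precise by showing that the upper-left and upper-right Dini derivatives satisfy $D^+\gamma \le D_-\gamma$ everywhere, which together with continuity yields concavity by a standard characterization.

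The main obstacle I anticipate is the bookkeeping at the finitely many points where $\gamma$ transitions between different covering functions $\varphi_i$, and ensuring that the graph being \emph{covered} (rather than $\gamma$ being pointwise equal to a single concave function on a whole subinterval) still gives enough control: a priori the index $i$ achieving $\gamma(t)=\varphi_i(t)$ could oscillate on a complicated set, so I cannot immediately assert that $\gamma$ agrees with one $\varphi_i$ on an interval. To handle this I would use a Baire-category or measure argument to find, in any subinterval, a further subinterval on which a single $\varphi_i$ realizes $\gamma$ on a dense set, then use continuity of $\gamma$ and of $\varphi_i$ to upgrade to equality there, and finally patch these pieces together using \eqref{nezlo} at the finitely many junctions. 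Once local concavity is established on each piece and the derivative inequality controls the corners, the global concavity on $[x,y]$ — and hence on $U$ — follows.
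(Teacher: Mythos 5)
Your reduction to dimension one and your reading of \eqref{nezlo} as ``right derivative $\leq$ left derivative'' (no upward corners) are both correct and agree with the paper's first step. But the core of your argument fails at two points. First, the ``standard characterization'' you invoke --- continuity plus $D^+\gamma \le D_-\gamma$ at every point implies concavity --- is false: any smooth \emph{convex} function such as $\gamma(x)=x^2$ satisfies this inequality with equality at every point (all four Dini derivatives coincide there), yet is not concave. A pointwise inequality between one-sided derivatives at a single point only forbids upward corners; concavity requires \emph{monotonicity} of the derivative, a relation between derivatives at \emph{different} points, which cannot follow from \eqref{nezlo} alone. This is exactly why the covering hypothesis \eqref{pokrc} must enter the argument in an essential, quantitative way, not merely as a source of local structure.

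Second, your patching step presupposes ``finitely many junctions,'' and that is unjustified: the sets $F_i=\{t:\gamma(t)=\varphi_i(t)\}$ are merely closed sets whose union is the interval; finiteness of the \emph{family} does not make the switching set finite. The complement of $\bigcup_i \operatorname{int} F_i$ can a priori be an uncountable Cantor-type set, and ``locally concave on a dense open set, continuous, no upward corners'' is precisely the configuration one must still work to rule out (Cantor-staircase-type examples show that dense-open local control plus continuity is far from sufficient by itself). The paper closes this gap by a mechanism your proposal lacks: writing $h=-\gamma$, it reduces concavity to showing that $h'_+$ is nondecreasing, invokes a local-to-global monotonicity criterion (citing Natanson) that requires only a one-sided comparison near each point, and proves that comparison by contradiction: given a bad sequence $x_n\to x_0$, finiteness of the covering family produces a \emph{single} index $\alpha$ such that every $x_n$ is a right accumulation point of the closed set $F_\alpha$ and $x_0\in F_\alpha$ as well; at such accumulation points the one-sided derivatives of $h$ and $h_\alpha$ coincide, so the convexity of $h_\alpha$ together with $h'_-\le h'_+$ yields the contradiction. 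Some argument of this kind, handling a possibly infinite exceptional set through the accumulation-point structure of the sets $F_i$, is the missing idea; without it the passage from local information to global concavity does not go through.
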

\begin{proof}
Since $\gamma$ is clearly concave if each function $t\mapsto \gamma(a + tv),\ (a\in C, v \in S^{d-1})$  
 is concave on its domain, it is sufficient to prove the case $d=1$, $C=(a,b)$.
Set $h(x)\coloneqq  -\gamma(x),\ x \in (a,b)$; we need to prove that $h$ is convex. Observe that
 \eqref{nezlo} easily implies the condition
\begin{equation}\label{prle}
h'_-(x)\leq h'_+(x),\ \ \ x \in (a,b).
\end{equation}
 and \eqref{pokrc} implies that there exists a finite set $\{h_{\alpha}:\ \alpha\in A\}$ of convex functions on 
  $(a,b)$ such that $\graph h \subset \bigcup\{\graph h_{\alpha}:\ \alpha \in A\}$.
To prove the convexity of $h$, it is sufficient to show that the function $h'_+$ is nondecreasing on
 $(a,b)$ (see e.g.  \cite[Chap. 5, Prop. 18, p. 114]{Roy}); equivalently (it follows e.g. from \cite[Chap. IX, \S7, Lemma 1, p. 266]{Na}) to prove  that
\begin{multline}\label{movbo}
\forall x_0 \in (a,b)\ \exists \delta>0\ \forall x: \ (x \in (x_0,x_0+\delta) \Rightarrow h'_+(x)\geq h'_+(x_0))
\\ \wedge\ \  (x \in (x_0-\delta,x_0) \Rightarrow h'_+(x)\leq h'_+(x_0)).
\end{multline} 
So suppose, to the contrary, that \eqref{movbo} does not hold; then there exists a sequence
 $x_n \to x_0$ such that either
\begin{equation}\label{nale}
x_n < x_0\ \ \text{and}\ \ h'_+(x_n) > h'_+(x_0)\ \ \text{for each}\ \ n \in \en
\end{equation}  
 or
\begin{equation}\label{napr}
x_n > x_0\ \ \text{and}\ \ h'_+(x_n) < h'_+(x_0)\ \ \text{for each}\ \ n \in \en.
\end{equation} 
Since $h$ is clearly continuous, each set $F_{\alpha}\coloneqq  \{x \in (a,b):\ h_{\alpha}(x)=h(x)\}$,
 $\alpha \in A$, is closed in $(a,b)$. Since $A$ is finite, it is easy to see that for each $n \in \en$ there exists $\alpha(n) \in A$ such that $x_n \in F_{\alpha(n)}$ and $x_n$ is a right accumulation
 point of  $ F_{\alpha(n)}$.  Using finiteness of $A$ again, we can suppose that there exists $\alpha \in A$
 such that $\alpha(n)= \alpha$, $n \in \en$ (otherwise we could consider a subsequence of $(x_n)$).
 
Now suppose that \eqref{nale} holds. Since $x_n \in F_{\alpha},\ n=0,1,\dots,$ we obtain
 that $h'_+(x_n)= (h_{\alpha})_+'(x_n),\ n \in \en$, and $h'_-(x_0) = (h_{\alpha})_-'(x_0)$. 
Using also the convexity of $h_{\alpha}$ and \eqref{prle}, we obtain
$$ h'_+(x_n)= (h_{\alpha})_+'(x_n) \leq  (h_{\alpha})_-'(x_0) =  h'_-(x_0) \leq h'_+(x_0),$$
 which contradicts \eqref{nale}. Since the case when \eqref{napr} holds is quite analogous,
 neither \eqref{nale} nor \eqref{napr} is possible and so we are done.
\end{proof}
We will need also the following easy lemma.
 \begin{lem}\label{kofu}
	 Let $V$ be a closed angle in $\R^2$ with vertex $v$ and measure $0<\alpha< \pi$.
	 Then  there exist an affine function $A$ on $\R^2$ and  a concave function $\psi$ on $\R^2$ which is Lipschitz with constant
	 $\sqrt{2}\tan (\alpha/2)$ such that  $|z-v| + \psi(z)= A(z),\ z \in V$.
	\end{lem}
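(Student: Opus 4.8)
The plan is to put the angle into a standard position and then replace the obvious but Lipschitz-suboptimal decomposition by a concave extension assembled from tangent planes. First I would apply a rigid motion so that $v=0$ and the bisector of $V$ points along $e_1=(1,0)$; then $V=\{(r\cos\theta,r\sin\theta):r\ge 0,\ |\theta|\le\alpha/2\}$ and $|z-v|=|z|$. I take the affine function $A(z):=x_1=\langle z,e_1\rangle$ and set $\psi_0:=A-|\cdot|$, so that $|z|+\psi_0(z)=A(z)$ holds on all of $\R^2$ and $\psi_0$ is concave (affine minus convex). The catch is that the global Lipschitz constant of $\psi_0$ is as large as $2$, whereas we need a constant that degenerates as $\alpha\to0$. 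So the genuine task is to build a concave $\psi$ on $\R^2$ that agrees with $\psi_0$ only on $V$ yet is Lipschitz with constant $\le\sqrt2\tan(\alpha/2)$.

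The key computation is that $\psi_0$ is $C^\infty$ on $\R^2\setminus\{0\}$, and at $w=r(\cos\theta,\sin\theta)$ one has $\nabla\psi_0(w)=(1-\cos\theta,-\sin\theta)$, hence $|\nabla\psi_0(w)|=\sqrt{2-2\cos\theta}=2|\sin(\theta/2)|$. In particular, for $w\in V\setminus\{0\}$ we have $|\theta|\le\alpha/2$ and therefore $|\nabla\psi_0(w)|\le 2\sin(\alpha/4)$. This is the bound that will be small for small $\alpha$.

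Next I would define $\psi$ as the infimum of the supporting (tangent) affine functions of the concave function $\psi_0$, taken only at points of $V\setminus\{0\}$:
\[
\psi(z):=\inf_{w\in V\setminus\{0\}}\big(\psi_0(w)+\langle\nabla\psi_0(w),z-w\rangle\big).
\]
As an infimum of affine functions, $\psi$ is concave; since each of these affine functions is $2\sin(\alpha/4)$-Lipschitz by the gradient bound, $\psi$ is $2\sin(\alpha/4)$-Lipschitz too, once we know it is finite — and it is, because concavity of $\psi_0$ gives $\psi_0(w)+\langle\nabla\psi_0(w),z-w\rangle\ge\psi_0(z)>-\infty$ for every $w$. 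That same supporting inequality, together with equality at $w=z$, yields $\psi=\psi_0$ on $V\setminus\{0\}$; and since the tangents at points of the open ray $\{(x_1,0):x_1>0\}\subset V$ are identically $0$ (there $\psi_0=0$ and $\nabla\psi_0=0$), concavity forces $\psi(0)=0=\psi_0(0)$, so $\psi=\psi_0$ on all of $V$. Hence $|z-v|+\psi(z)=|z|+\psi_0(z)=A(z)$ for $z\in V$.

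It then remains to verify the numerical bound $2\sin(\alpha/4)\le\sqrt2\tan(\alpha/2)$ for $0<\alpha<\pi$. Writing $\beta=\alpha/2\in(0,\pi/2)$ and using $\tan\beta=2\sin(\beta/2)\cos(\beta/2)/\cos\beta$, after cancelling $2\sin(\beta/2)>0$ this reduces to $\cos\beta\le\sqrt2\cos(\beta/2)$, which holds since $\cos\beta\le1$ and $\cos(\beta/2)\ge\cos(\pi/4)=1/\sqrt2$. I expect the only genuinely delicate point to be the simultaneous control of concavity, the small Lipschitz constant, and exact agreement with $\psi_0$ on the \emph{whole} closed angle including the vertex; the inf-of-tangents construction is exactly what reconciles these three demands, with the vertex handled automatically by the flat tangents along the symmetry axis.
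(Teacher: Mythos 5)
Your proof is correct, and it uses the same underlying decomposition as the paper: after a rigid motion placing the vertex at the origin with bisector along $e_1$, both proofs take $A(z)=\langle z,e_1\rangle$ and a concave function that equals $\langle z,e_1\rangle-|z|$ on $V$, the whole content being that this function has small gradient inside the angle and admits a concave Lipschitz extension to $\R^2$. Where you differ is in how the extension is obtained: the paper bounds the partial derivatives of $\varphi(x,y)=\sqrt{x^2+y^2}-x$ in Cartesian coordinates by $\tan(\alpha/2)$ each (hence $\sqrt{2}\tan(\alpha/2)$ for the gradient) and then invokes the Cobzas--Mustata theorem on norm-preserving extension of convex Lipschitz functions, whereas you build the extension by hand as the infimum of the tangent planes of $\psi_0$ at points of $V\setminus\{0\}$, checking finiteness, concavity, the Lipschitz bound, and agreement on $V$ (including the vertex, via the flat tangents along the axis) directly. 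This makes your argument self-contained -- you have essentially inlined the proof of the extension theorem in the concave setting -- and your polar-coordinate computation $|\nabla\psi_0|=2|\sin(\theta/2)|$ yields the sharper constant $2\sin(\alpha/4)\le\sqrt{2}\tan(\alpha/2)$, which is also asymptotically smaller as $\alpha\to\pi$ (it stays bounded by $\sqrt 2$ while $\tan(\alpha/2)$ blows up); the paper's version buys brevity by outsourcing the extension step to a citation. Since the lemma is later summed over many small angles $\alpha_i$, either constant works, and your improvement would in fact slightly improve the bound \eqref{otap}.
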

	\begin{proof}
	We can suppose without any loss of generality that $v= (0,0)$ and
	$$  V= \{(x,y):\ x \geq 0, |y| \leq x\, \tan(\alpha/2)\}.$$
	Then $|z-v| = \sqrt{x^2+y^2}$ for $z=(x,y)$. Define the convex function
	$$\vf(x,y)\coloneqq  \sqrt{x^2+y^2} - x,\ (x,y)\in V.$$
	 We will show that
	\begin{equation}\label{jelip}
	\vf\ \ \text{is Lipschitz with constant}\ \ \sqrt{2}\tan (\alpha/2).
	\end{equation}
	To this end estimate, for $(x,y) \in  \inter\, V$,
	$$ \left|\frac{\partial \vf}{\partial x}(x,y)\right| = \left| \frac{x}{\sqrt{x^2+y^2}}-1\right|
	 =  \frac{y^2}{ ( x + \sqrt{x^2+y^2}) \sqrt{x^2+y^2} } \leq \frac{|y|}{x} \leq \tan (\alpha/2),
$$	
	$$ \left|\frac{\partial \vf}{\partial y}(x,y)\right| = 
	  \frac{|y|}{  \sqrt{x^2+y^2} } \leq \frac{|y|}{x} \leq \tan (\alpha/2).
$$	
 Thus $|\grad \vf (x,y)| \leq \sqrt{2}\tan (\alpha/2)$ for $(x,y) \in  \inter\, V$ 
 and \eqref{jelip} follows. So $\vf$ has a convex extension $\tilde \vf$ to $\R^2$
 which is also Lipschitz with constant  $\sqrt{2}\tan (\alpha/2)$ (see, e.g., \cite[Theorem 1]{CM}).
  Now we can  put $\psi\coloneqq  - \tilde \vf$, since $\sqrt{x^2+y^2} + \psi(x,y) =x=: A(x,y),\ (x,y) \in V$.
\end{proof}

	\begin{thm}\label{hlav}
	 Let $f: \R \to \R$ be a DC function. Then the distance function $d\coloneqq  \dist(\cdot, \graph f)$
	 is DC on $\R^2$.
	\end{thm}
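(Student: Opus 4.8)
The plan is to prove the stronger‑looking but equivalent statement that $d$ is \emph{locally} DC at every point of $\R^2$; by Lemma \ref{vldc}(ii) this yields that $d$ is DC. At points of $\R^2\setminus\graph f$ there is nothing to do, since $d=d_{\graph f}$ is locally semiconcave there by \eqref{loksem}, hence locally DC. Thus the whole difficulty is concentrated at the points of $\graph f$ itself, where $d$ vanishes and the metric projection onto $\graph f$ degenerates.

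My first reduction is to split the distance into its two one‑sided parts. Writing $\epi f=\{(x,y):y\ge f(x)\}$ and $\hypo f=\{(x,y):y\le f(x)\}$ for the closed epigraph and hypograph, I claim that
\[
 d = d_{\epi f} + d_{\hypo f} \qquad\text{on }\R^2 .
\]
Indeed, a point lying above $\graph f$ belongs to $\epi f$, so $d_{\epi f}=0$ there, while its nearest point of $\hypo f$ must lie on the common boundary $\graph f$, so $d_{\hypo f}=d$; below $\graph f$ the roles are reversed, and on $\graph f$ all three functions vanish. Consequently it suffices to prove that $d_{\epi f}$ is DC, because the reflection $(x,y)\mapsto(x,-y)$ is an isometry carrying $\hypo f$ onto $\epi(-f)$, and $-f$ is again DC by Lemma \ref{vldc}(i); thus DC‑ness of $d_{\epi f}$ for \emph{all} DC data $f$ forces $d_{\hypo f}$ to be DC as well, and Lemma \ref{vldc}(i) finishes via the displayed sum. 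As a by‑product the signed distance $d_{\hypo f}-d_{\epi f}$ is DC, which is the form most useful in the sequel.

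So I am reduced to a one‑sided problem: show $d_{\epi f}$ is locally DC at a point $p=(x_0,f(x_0))$ of the graph. Here I would use the finite one‑sided slopes $\lambda\coloneqq f'_-(x_0)$ and $\rho\coloneqq f'_+(x_0)$, which exist and are finite because a DC function on $\R$ is a difference of convex functions. Two local pictures occur. If $\rho\ge\lambda$ the epigraph is locally convex‑cornered at $p$ and there is a downward normal sector $V$ at $p$ of opening $<\pi$; on $V$ the projection is the single point $p$, so $d_{\epi f}(z)=|z-p|$, and Lemma \ref{kofu} rewrites this as an affine function minus a concave one, i.e. as a convex (hence DC) expression on $V$, the explicit Lipschitz bound furnished by Lemma \ref{kofu} being exactly what is needed to match it continuously to the neighbouring pieces. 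If $\rho<\lambda$ the epigraph has a reentrant corner, the vertex $p$ is never a nearest point, and $d_{\epi f}$ coincides near $p$ with the minimum of the distances to the two graph branches emanating from $p$; each branch‑distance is locally semiconcave by \eqref{loksem} and the minimum of DC functions is DC by Lemma \ref{vldc}(i). In both pictures one checks the sign condition \eqref{nezlo} on directional derivatives using the semiconcavity inequality \eqref{zlose} for the off‑graph distance, so that Lemma \ref{comix} can certify concavity of the relevant piece once a finite concave cover is exhibited.

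The hard part — and the reason the elementary case‑split above is not yet a proof — is that a general DC function may have infinitely many corners accumulating at $x_0$, of both convex and reentrant type, so the neighbourhood cannot simply be cut into one convex sector plus two smooth branches. The technical core is therefore to produce, on a fixed small neighbourhood of $p$, a \emph{finite} family of concave functions whose graphs cover the graph of $d_{\epi f}$ (or its negative), so that the concave mixing lemma \ref{comix} applies; the small corners must be absorbed rather than enumerated. I would control this using the decomposition $f=g-h$ with $g,h$ convex: the convex part $g$ organises the convex corners, each contributing through Lemma \ref{kofu} a piece of controlled Lipschitz constant, while the reentrant corners coming from $h$ are handled through minima of semiconcave branch‑distances as above, the summability of the one‑sided‑derivative jumps of a DC function keeping the construction finite up to a uniformly small error. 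Verifying that these pieces glue — via Lemma \ref{vldc}(iv) for the continuous selection among finitely many DC candidates and via Lemma \ref{comix} for the concave blocks — and that the resulting local decompositions patch into a DC function on all of $\R^2$ through Lemma \ref{vldc}(ii), is where essentially all the work lies.
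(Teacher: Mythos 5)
Your preliminary reductions are correct but they are not where the theorem lives. The identity $d=d_{\epi f}+d_{\hypo f}$ does hold (every point lies in one of the two closed sets, and its nearest point in the other lies on the common boundary $\graph f$), so it is legitimate to pass to the one-sided distance $d_{\epi f}$; and your two local pictures at an \emph{isolated} corner (a normal sector on which $d_{\epi f}(z)=|z-p|$, handled by Lemma \ref{kofu}, versus a reentrant corner handled as a minimum of two branch distances) are fine as far as they go. The genuine gap is exactly the part you flag yourself and then only gesture at: a DC function can have corners of both types accumulating at $x_0$, and for such $f$ your construction produces one concave piece per corner (via Lemma \ref{kofu}) and one or two pieces per arc between corners --- that is, \emph{infinitely} many pieces on every neighbourhood of $p$ --- whereas the hypothesis \eqref{pokrc} of Lemma \ref{comix} demands a \emph{finite} concave cover. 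You offer no mechanism for merging these pieces, and the phrase ``finite up to a uniformly small error'' cannot supply one: DC-ness is not preserved under uniform approximation (every continuous function on $\R$ is a uniform limit of piecewise linear, hence DC, functions), so exhibiting the required structure up to a small sup-norm error proves nothing about $d$ itself.

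What is missing is the limiting mechanism that converts the finite-corner case into the general case, and this is precisely the core of the paper's proof. There one approximates $f=g-h$ by piecewise linear interpolants $f_n=g_n-h_n$, for which the corner count is finite and Lemma \ref{comix} genuinely applies; the decisive point is the uniform turning bound \eqref{ssi}--\eqref{otap}, coming from the monotonicity of the slopes of the convex functions $g_n$, $h_n$, which guarantees that the concave compensators $c_n=\eta_n+\xi_n$ (built from Lemma \ref{kofu} at the corners and from the crease correction $\xi_n$ along the graph) are Lipschitz with a constant $L^*$ \emph{independent of $n$} \eqref{lcn}, while $d_n+c_n$ is concave \eqref{dnpcn}. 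Arzel\`a--Ascoli then yields a subsequence $c_{n_k}\to c$ with $c$ concave and $d+c$ concave, so that $d=(d+c)-c$ is DC. It is this combination --- finite problems solved with \emph{uniformly} Lipschitz concave compensators, followed by a compactness argument --- that ``absorbs rather than enumerates'' the accumulating corners. Your proposal contains the local building blocks (Lemma \ref{kofu}, Lemma \ref{comix}, the semiconcavity \eqref{loksem}) but not this approximation-and-limit scheme, and without it, or some comparable new idea, the sketch cannot be closed.
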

	\begin{proof}
	By \eqref{loksem}, $d$ is  locally DC on
	 $\R^2 \setminus \graph f$. So, by Lemma \ref{vldc} (ii), 
	 it is sufficient to prove that, for each $z \in \graph f$, the distance function $d$ is DC
	 on a convex neighbourhood of $z$. Since we can clearly suppose that $z= (0, f(0))$,
 it is sufficient to prove that 
	\begin{equation}\label{suff}
	\text{$d$ is DC on $U\coloneqq U((0,f(0)), 1/10)$.}
	\end{equation}
	
	 Write $f= g - h$, where $g$, $h$ are convex functions on $\R$. For each $n \in \en$, consider
	 the equidistant partition $D_n= \{x^n_0= -1 < x^n_1< \dots
	 < x^n_n=1\}$ of $[-1,1]$. Let $g_n$, $h_n$ be the piece-wise linear function on $[-1,1]$
	 such that $g_n(x^n_i) =  g(x^n_i)$, $h_n(x^n_i) =  h(x^n_i)$ ($0\leq i \leq n$) and 
	 $g_n$, $h_n$ are affine on each interval $[x_{i-1}, x_i]$ ($1\leq i \leq n$).
	 Put $f_n\coloneqq  g_n - h_n$ and $d_n\coloneqq  \dist(\cdot, \graph f_n)$. Choose $L > 0$ such that both
	$g\restriction_{[-1,1]}$ and $h\restriction_{[-1,1]}$ are $(L/2)$-Lipschitz and observe 
	 that all $g_n$, $h_n$, $f_n$
	  are  $L$-Lipschitz. Since  $f_n$ uniformly converge to $f$ on $[-1,1]$, we easily
		 see that $d_n \to d$ on $\overline{U}$.

	Choose an integer $n_0$ such that
	\begin{equation}\label{nnula}
	n_0 \geq 6 \ \ \text{and}\ \ |f_n(0)- f(0)| < \frac{1}{10}\ \ \text{for each}\ \ n \geq n_0.
	\end{equation}
	
	 We will prove that there exist $L^* >0$ and concave functions $c_n$ ($n\geq n_0$) on 
	 $\overline{U}$ 
	 such that
	\begin{equation}\label{lcn}
	 \text{each}\ \ c_n\ \ \text{is Lipschitz with constant}\ \ L^*\ \ \text{and}
	\end{equation}
	\begin{equation}\label{dnpcn}
	c_n^*\coloneqq  d_n + c_n\ \ \text{is concave on}\ \ \overline{U}.
	\end{equation}
  Then we will done, since 	\eqref{lcn} and \eqref{dnpcn} easily imply \eqref{suff}. Indeed,
	 we can suppose that $c_n((0,f(0)))=0$ and, using Arzel\`a-Ascoli theorem, we obtain that
	 there exists an increasing sequence of indices $(n_k)$ such that $c_{n_k} \to c$, where
	 $c$ is a continuous concave function on  $\overline{U}$. So
	$d_{n_k} + c_{n_k} \to d+c=:c^*$  on  $\overline{U}$. Using \eqref{dnpcn}, we obtain that  $c^*$ is concave and thus $d=c^*-c$ is DC
	 on $U$.

	To prove the existence of $L^*$ and $(c_n)$, fix an arbitrary $n\geq n_0$. For brevity
	 denote   $\Pi\coloneqq \Pi_{\graph f_n}$ and 
	 put    $x_i\coloneqq x^n_i$,\ 
	 $z_i\coloneqq  (x_i, f_n(x_i)),\ i=0,\dots,n$.  For  $i=1,\dots,n-1$, let  $0 \leq \alpha_i < \pi$
	 be the angle between the vectors $z_i - z_{i-1}$ and $z_{i+1}- z_i$.
	 Denote  
	$$s_i: = \frac{f_n(x_{i+1})- f_n(x_i)}{x_{i+1}-x_i}\ \text{and}\ \beta_i\coloneqq  \arctan s_i,\ \ 
	 i=0,\dots,n-1.$$
	Then clearly $\alpha_i= |\beta_i - \beta_{i-1}|$. One of the main ingredients of the present proof  is the easy fact
	 that
	 \begin{equation}\label{ssi}
	 \sum_{i=1}^{n-1} |s_i- s_{i-1}| \leq 4L.
	 \end{equation}
	 It immediately follows from the well-known estimate of (the ``convexity'') $K_a^b(f_n)$ (see \cite[p. 24, line 5]{RoVa}).
	  To give, for completeness, a direct proof, denote
	  $$ \tilde s_i: = \frac{g_n(x_{i+1})- h_n(x_i)}{x_{i+1}-x_i},\ \ s^*_i: = \frac{h_n(x_{i+1})- h_n(x_i)}{x_{i+1}-x_i},
	   \ \ i=0,\dots,n-1,$$
	    and observe that the finite sequences $(\tilde s_i)$, $(s^*_i)$ are nondecreasing. Consequently
	    $$  \sum_{i=1}^{n-1} |\tilde s_i- \tilde s_{i-1}| = \tilde s_n - \tilde s_1 \leq 2L \ \ \text{and}\ \
	     \sum_{i=1}^{n-1} | s^*_i-  s^*_{i-1}| =  s^*_n -  s^*_1 \leq 2L. 
	    $$ 
	     Since $s_i= \tilde s_i - \tilde s^*_i$, \eqref{ssi} easily follows.

	 Since 
	$$ \alpha_i = |\beta_i - \beta_{i-1}| \leq |\tan(\beta_i) - \tan(\beta_{i-1})|= |s_i- s_{i-1}|,$$
	 we obtain
	\begin{equation}\label{odsa}
	\sum_{i=1}^{n-1} \alpha_i \leq 4L.
	\end{equation}
	Since  $|\beta_i| \leq \arctan L$, we have $\alpha_i/2 \leq \arctan L$. Further, since 
	 the function $\tan$ is convex on $[0, \pi/2)$, the function $s(x)= \tan x/x$ is increasing on
	  $(0, \pi/2)$. These facts easily imply
	$$  \tan \left(\frac{\alpha_i}{2}\right) \leq \frac{\alpha_i}{2}\cdot \frac{L}{\arctan L}.$$
 Thus we obtain by \eqref{odsa}
\begin{equation}\label{otap}
\sum_{i=1}^{n-1} \sqrt{2} \tan \left(\frac{\alpha_i}{2}\right) \leq   \frac{2\sqrt{2}L^2}{\arctan L} =: M.
\end{equation}

Further observe  that each $d_n$ is DC on $\R^2$ and consequently
		\begin{equation}\label{exdir}
		(d_n)_{+}'(x,v) \in \R\ \ \ \text{exists for every}\ \ \ x,v \in \R^2.
		\end{equation}
		Indeed, since each segment $[z_{i-1},z_i]$ is a convex set, by the well known fact the distance functions  $\dist(\cdot, [z_{i-1}, z_i])$, $i=1,\dots,n,$ are convex and consequently $d_n$ is DC by \eqref{sjedno} below.
		%Lemma \ref{vldc} (iv) (``mixing lemma'').

If $\alpha_i \neq 0$, set
 $$V_i\coloneqq  \{ z \in \R^2:\ \langle z-z_i, z_{i+1}-z_i\rangle \leq 0,\ \langle z-z_i, z_{i-1}-z_i\rangle \leq 0 \},$$
  which is clearly a closed angle  with vertex  $z_i$	
 and measure $\alpha_i$. Let $\psi_i$ and $A_i$ be the (concave and affine) functions on
 $\R^2$ which correspond to $V_i$ by Lemma \ref{kofu}.
If $\alpha_i =0$,  put $\psi_i(z)\coloneqq 0,\ z \in \R^2$.

 Now set
$$ \eta_n: = \sum_{i=1}^{n-1}  \psi_i.$$
Then  $\eta_n$ is a concave function on $\R^2$ and Lemma \ref{kofu} with \eqref{otap} imply that
\begin{equation}\label{lipeta}
  \eta_n\ \ \text{ is Lipschitz with constant}\ \ M,
  \end{equation}
   and, if $\alpha_i \neq 0$,
 \begin{equation}\label{kompvi}
|z - z_i| + \psi_i(z) = A_i(z),\  z\in  V_i.
\end{equation}

 The concave function $c_n$ with properties  \eqref{lcn},  \eqref{dnpcn} will be defined
 as  $c_n(x)\coloneqq  \eta_n(x) + \xi_n(x),\ x \in \overline{U}$, where the concave function $\xi_n$  on $A\coloneqq  (-1,1) \times \R$ will be defined to
 ``compensate the non-concave behaviour of $d_n$ at points of $\graph f_n$'' in the sense
 that, for each point $z \in A \cap \graph f_n$,
\begin{equation}\label{kompgr}
 (d_n + \xi_n)_+'(z,v) +  (d_n + \xi_n)_+'(z,-v)  \leq 0\ \ \ \text{whenever}\ \ \ v \in \R^2.
\end{equation}
We set, for $(x,y) \in A$,
$$ \xi_n(x,y)\coloneqq  - \max(2g_n(x)-y, 2h_n(x)+y)\ \ \ \text{and}\ \ \ p_n(x,y)\coloneqq  |f_n(x)-y|.$$
Obviously, 
\begin{equation}\label{lipxi}
 \text{$\xi_n$ is concave and Lipschitz with constant $2L+1$.}
 \end{equation}
Further, for $(x,y) \in A$,
\begin{multline*}
 p_n(x,y) = \max(g_n(x)-h_n(x)-y, h_n(x)-g_n(x)+y)\\ = \max(2g_n(x)-y, 2 h_n(x)+y)
  - h_n(x) - g_n(x), 
	\end{multline*}
which shows that $p_n$ is a DC function and $p_n + \xi_n$ is concave. Consequently, for
  each $z\in A$ and $v \in \R^2$,
		\begin{equation}\label{pxi}
		(p_n)'_+(z,v) + (\xi_n)'_+(z,v) + (p_n)'_+(z,-v) + (\xi_n)'_+(z,-v)  \leq 0.
		\end{equation} 

  Since, for each point
 $z \in \graph f_n \cap A$, we have $d_n(z)= p_n(z)=0$ and for each $(x,y) \in A$ clearly
 $d_n(x,y) \leq |(x,y) - (x,f_n(x))| = p_n(x,y)$, we easily obtain (for each $v \in \R^2$)
\begin{equation}\label{troj}
 (p_n)'_+(z,v) +  (p_n)'_+(z,-v)  \geq  (d_n)'_+(z,v) +  (d_n)'_+(z,-v),
\end{equation}
  which, together with \eqref{pxi},
  implies   \eqref{kompgr}.

Now set
$$c_n(x)\coloneqq  \eta_n(x) + \xi_n(x),\ x \in \overline{U}.$$
By \eqref{lipeta} and \eqref{lipxi} we obtain that \eqref{lcn} holds with $L^*\coloneqq  M + 2L+1$.

To prove \eqref{dnpcn}, it is clearly sufficient to show that $\gamma=c_n^*: = d_n + c_n$
 is concave on $U$; we will prove it by Lemma \ref{comix}. 
  
 First we verify the validity
 of \eqref{nezlo} for each $z \in U$.  If $z \notin \graph f_n$, then \eqref{nezlo} holds by  \eqref{zlose}, since 
  $\gamma = d_n +  \eta_n + \xi_n$ on $U$, $d_n$ is locally semiconcave on $\R^2 \setminus  \graph f_n$ and $\eta_n+ \xi_n  $
   is concave on $U$.  If  $z \in \graph f_n$, then \eqref{nezlo} follows by \eqref{kompgr} and the concavity of $\eta_n$ on $U$.
   
 So it is sufficient to verify \eqref{pokrc}. To this end, first define on $U$ the functions
 $$   \omega_i\coloneqq  \dist(\cdot, l(z_i,z_{i+1}))\ \ \text{and}\ \ \mu_i\coloneqq  \omega_i+ \eta_n + \xi_n,\ \ \ \ i=1,\dots,n-1. $$
 Since each $\graph \omega_i$ is covered by graphs of two affine functions, we see 
 that 
 \begin{equation}\label{podvco}
 \text{$\graph \mu_i$ is covered by graphs of two concave functions.}
 \end{equation}
 
   Now consider an arbitrary $z\in U$
 and choose a point  $z^* \in  \Pi(z)$. 
 Since $d_n(z) \leq 1/5$ by  \eqref{nnula} and $n \geq n_0 \geq 6$,
 we obtain  $z^* \in \bigcup_1^{n-2} [z_i,z_{i+1}]$. 
 
 If $z^*= z_i$ for some $1\leq i \leq n-1$
 with $\alpha_i \neq 0$, then we easily see that $z\in V_i$ and $d_n(z)= |z-z_i|$, and consequently
$$  \gamma(z)=  (|z-z_i| + \psi_i(z)) + \sum_{1\leq j\leq n-1, j\neq i} \psi_j(z) +\xi_n(z)       = \nu_i(z),$$
where
$$\nu_i(z)\coloneqq  A_i(z) + \sum_{1\leq j\leq n-1, j\neq i} \psi_j(z) +\xi_n(z) ,\ \ z \in U,$$
 is concave on $U$.

If  $z^*= z_i$ and $\alpha_i=0$, or $z^*\in [z_i,z_{i+1}]\setminus\{z_i, z_{i+1}\}$ for some $1\leq i \leq n-1$, then
 clearly $d_n(z) = \omega_i(z)$ and so $\gamma(z) = \mu_i(z)$.

So we have proved that the graph of $\gamma=c^*_n$ is covered by graphs of functions $\nu_i,\  1\leq i \leq n-1$, $\alpha_i \neq 0$,  and  functions $\mu_i,\  1\leq i \leq n-1$.  Using  \eqref{podvco}, we obtain \eqref{pokrc}
 and  Lemma \ref{comix} implies that $\gamma=c^*_n$ is concave.
	\end{proof}

\section{Other  results}\label{other}
We finish the article with a number of additional results on the systems
$$ \cal D_d\coloneqq  \{\varnothing\} \cup  \{\varnothing \neq A \subset \R^d: \ A\ \ \text{is closed}\ \ \text{and}\ \  
d_A\ \ \text{is DC}\},\ \ \ d=1,2,\dots.                $$
First we observe that the description of $\cal D_1$ is very simple since
\begin{equation}\label{cald1}
\text{$A \subset \R$ belongs to $\cal D_1$ iff the system of all components of $A$
 is locally finite.}
\end{equation}
Indeed, if the system of all components of $\varnothing \neq A\subset \R$ is locally finite,
 then Lemma \ref{vldc} (ii) easily implies that $d_A$ is DC.

If the system of all components of $A$
is not locally finite, then there exists a sequence $(c_n)$ of centres of components of
 $\R\setminus A$ converging to a point $a\in A$. Therefore $d_A$ is not one-sidedly strictly
 differentiable at $a$, since $(d_A)'_{\pm}(c_n) = \mp1$. Consequently 
 $d_A$ is not DC, since each DC function on $\R$ is  one-sidedly strictly
 differentiable at each point (see \cite[Note 3.2]{VeZa} or \cite[Proposition 3.4(i) together
 with Remark 3.2]{VZ2}).

From this characterisation easily follows that $\cal D_1$ is closed with respect to finite unions and intersections and that, for a closed set $M\subset \er$,
\begin{equation}\label{eq:boundaryEquivalence}
M\in \cal D_1\iff \partial M\in\cal D_1.
\end{equation}

Concerning $d\geq 2$ further observe that 
\begin{equation}\label{sjedno}
\cal D_d\ \ \text{is closed with respect to finite unions}.
\end{equation}
Indeed, if $\varnothing \neq  A \in \cal D_d$ and $\varnothing \neq  B \in \cal D_d$, then
$d_{A\cup B} = \min (d_A, d_B)$ and so $d_{A\cup B}$ is DC by Lemma \ref{vldc} (i). 

Example~\ref{ex:boundaryAndIntersection} below shows that already $\cal D_2$ is not closed with respect to finite intersections.
Equivalence \eqref{eq:boundaryEquivalence} does not generalize already to dimension $2$ either (see again Example~\ref{ex:boundaryAndIntersection}), however, one can see that, for a closed set $M\subset \er^d$,
\begin{equation}\label{eq:boundaryEquivalenceGeneral}
\partial M \in \cal D_d\iff (M \in \cal D_d \quad\text{and}\quad \overline{\R^d \setminus M}\in  \cal D_d),\quad d\in\en.
\end{equation}
%To prove that first denote $d_{\partial M}=\dist(\cdot,\partial M)$,
%$d_{M}=\dist(\cdot,M)$ and 
%$d_{\overline{\R^d \setminus M}}=\dist(\cdot,\overline{\R^d \setminus M})$.
To prove one implication suppose $\partial M\in \cal D_d$. If $x\not\in M$ then clearly $\Pi_M(x)\in\partial M$ and so $d_{M}(x)=d_{\partial M}(x)$.
Consequently, for each $x\in\er^d$, $d_{M}(x)\in\{0,d_{\partial M}(x)\}$ and so $M\in\cal D_d$ by Lemma~\ref{vldc}~(iv). Similarly, if $x\not\in \overline{\R^d \setminus M}$ then $\Pi_{\overline{\R^d \setminus M}}(x)\in\partial (\overline{\R^d \setminus M})\subset\partial M$ so again $d_{\overline{\R^d \setminus M}}(x)\in\{0,d_{\partial M}(x)\}$ and $\overline{\R^d \setminus M}\in  \cal D_d$ follows.

To prove the opposite implication it is enough to show that $d_{\partial M}=\max(d_M,d_{\overline{\R^d \setminus M}})$ if $\partial M\not=\varnothing$.
Clearly $d_{\partial M}\geq \max(d_M,d_{\overline{\R^d \setminus M}})$, since 
$\partial M= M\cap\overline{\R^d \setminus M}$.
To prove the opposite inequality suppose to the contrary that $$r\coloneqq d_{\partial M}(x)> \max(d_M(x),d_{\overline{\R^d \setminus M}}(x))$$ for some $x\in\er^d$.
Consequently $U(x,r)\cap M\not=\varnothing$ and $U(x,r)\cap\overline{\R^d \setminus M}\not=\varnothing$.
Then also $U(x,r)\cap\R^d \setminus M\not=\varnothing$ and thus $U(x,r)\cap\partial M\not=\varnothing$ which is a contradiction.

%pick $x\in\er^d$ and put $r=d_{\partial M}(x)$.
%If $x\in\partial M$ we can again use $\partial M= M\cap\overline{\R^d \setminus M}$ to obtain that 
%$d_{\partial M}=\max(d_M,d_{\overline{\R^d \setminus M}})=0$, so assume that $r>0$.
%Then $U(x,r)\cap\partial M=\varnothing$ and so either $U(x,r)\cap M=\varnothing$, or $U(x,r)\cap\overline{\R^d \setminus M}=\varnothing$ holds.
%In either case, the inequality 
%$d_{\partial M}=r\leq \max(d_M,d_{\overline{\R^d \setminus M}})$ follows.

Before presenting the following example we first observe that the function $g(x)=x^5\cos\frac{\pi}{x}$, $x\not=0$, $g(0)=0$, is $\C^2$ on $\er$ and therefore DC by Lemma~\ref{vldc}~(v).
Indeed, a direct computation shows that 
$g''(x)=x\left(8\pi x\sin\frac{\pi}{x}-(\pi^2-20x^2)\cos\frac{\pi}{x}\right)$, $x\not =0$, and $g''(0)=0$.

\begin{ex}\label{ex:boundaryAndIntersection}
	There are sets $A,B\in\cal D_2$ such that $A\cap B\not\in\cal D_2$. Further,
	there is a set $K\in {\cal D}_2$ such that $\partial K\not\in{\cal D}_2$ and 
	$\overline{\er^2\setminus K}\not\in{\cal D}_2$.
	
\end{ex}
\begin{proof}
	Define $g(x)=x^5\cos\frac{\pi}{x}$, $x\not=0$, $g(0)=0$, and $f(x)=0$, $x\in \er$. Put 
	$$
	A=\{ (x,y): y\geq f(x) \},\; 
	B=\{ (x,y): y\leq g(x) \},\; H=\{x: f(x)\leq g(x) \}.
	$$
	Since both $f$ and $g$ are DC, we obtain that $A,B\in \cal D_2$ by Theorem~\ref{hlav} and \eqref{eq:boundaryEquivalenceGeneral}.
	Put $M=A\cap B$ and $K=\overline{\er^2\setminus M}$.
	Clearly also $M=\overline{\er^2\setminus K}$.
	First note that $M\not\in\cal D_2$ since the function $x\mapsto d_M(x,0)$ is equal to $d_H$, but clearly $H\not\in \cal D_1$ by\eqref{cald1}.
	
	We obtain that $K\in\cal D_2$ by \eqref{sjedno}, since $K=C\cup D$, where $C=\{ (x,y): y\leq f(x) \}$ and $D=\{ (x,y): y\geq g(x) \}$, and $C,D\in \cal D_2$ by Theorem~\ref{hlav} and \eqref{eq:boundaryEquivalenceGeneral}.
	Finally, $\partial K\not\in\cal D_2$ by \eqref{eq:boundaryEquivalenceGeneral} applied to $K$.
\end{proof}
	
%%	but $M,\partial K\not\in\cal D_2$ since the function $x\mapsto d_M(x,0)$ is equal to $d_H$, but clearly $H\not\in \cal D_1$ (and similarly for $d_{\partial K}$).
%	The second part follows by putting $A=U_f$ and $B=L_g$.	

Now we will show that equivalence \eqref{eq:boundaryEquivalence} holds for sets $M$ of positive reach (cf. \eqref{prdd}).
We first recall their definition.

If $A\subset \R^d$ and $a\in A$, we define
$$\reach(A,a)\coloneqq \sup\{r\geq 0:\, \Pi_A(z)\ \text{is a singleton for each}\ \ z \in U(a,r)\}$$
and the reach of $A$ as
$$\reach A\coloneqq \inf_{a\in A}\reach (A,a).$$
Note that each set with positive reach is clearly closed.

As mentioned in Introduction, it is essentially well-known that
\begin{equation}\label{prdd}
\text{if $A \subset \R^d$ has positive reach, then $A \in \cal D_d.$}
\end{equation}
Indeed, for each $a\in A$ \cite[Proposition 5.2]{CH}
implies that $d_A$ is semiconvex on $U(a, \reach A/2)$, which with \eqref{loksem} and
Lemma \ref{vldc} (ii) implies that $d_A$ is DC.

%
%As far as we know, the following  observation
%is new.

\begin{prop}\label{doplpr}
	Let $\varnothing \neq A \subset \R^d$ be a set with positive reach and  $B\coloneqq  \overline{\R^d \setminus A}$. Then both $B$ and $\partial A$ belong to $\cal D_d$.
\end{prop}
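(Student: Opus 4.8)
The plan is to reduce the whole statement to a single regularity fact about the distance to the complement and then to establish that fact from positive reach. Since $A$ has positive reach, \eqref{prdd} already gives $A\in\cal D_d$. Taking $M=A$ in \eqref{eq:boundaryEquivalenceGeneral} and noting $\overline{\R^d\setminus A}=B$, the implication ``$A\in\cal D_d$ and $B\in\cal D_d$'' $\Rightarrow$ ``$\partial A\in\cal D_d$'' lets me obtain both asserted conclusions at once as soon as I prove $B\in\cal D_d$, i.e.\ that $d_B$ is DC. So from now on I would only prove that $d_B$ is DC.

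For this I localize. On the open set $\R^d\setminus A$ we have $d_B\equiv0$, and on $\R^d\setminus B=\inter A$ the function $d_B$ is locally DC by \eqref{loksem}; hence, by Lemma \ref{vldc}(ii), it suffices to show that $d_B$ is DC on a ball $U=U(a,\rho)$ around an arbitrary $a\in\partial A$. The key device is the signed distance $h:=d_B-d_A$ (so $h\ge0$ on $A$ and $h\le0$ on $B$) together with the identity $d_B=\max(0,h)$, valid because $h=d_B$ on $A$ and $h=-d_A$ on $B$. By Lemma \ref{vldc}(i) it is then enough to prove that $h$ is DC near $a$, and I would prove the sharper statement that $h$ is semiconcave on some $U(a,\rho)$.

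Semiconcavity of $h$ I would derive from the concave mixing lemma, Lemma \ref{comix}, applied to $\gamma:=h-c|\cdot|^2$ on $U(a,\rho)$ for a large enough constant $c$. The finite concave covering required in \eqref{pokrc} is assembled from the two sides of $\partial A$: outside, $h=-d_A$, and since $d_A$ is semiconvex on $U(a,\reach A/2)$ by \cite[Proposition 5.2]{CH}, the function $\gamma$ restricted to $\overline U\setminus\inter A$ has a concave extension $\phi_{\mathrm{out}}$ to $U$; inside, $h=d_B$, and (this is the new ingredient) positive reach of $A$ yields that $d_B=\dist(\cdot,\partial A)$ is semiconcave with a uniform constant on $\overline U\cap A$ up to $\partial A$, giving a concave extension $\phi_{\mathrm{in}}$. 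Then $\graph\gamma\subset\graph\phi_{\mathrm{in}}\cup\graph\phi_{\mathrm{out}}$. Since the smooth term $-c|\cdot|^2$ contributes $0$ to the expression in \eqref{nezlo}, condition \eqref{nezlo} for $\gamma$ reduces to $h'_+(x,v)+h'_+(x,-v)\le0$; for $x\notin\partial A$ this holds because $\gamma$ is locally concave there, and for $x\in\partial A$ it expresses the first--order compatibility of $d_B$ (computed into $A$) with $-d_A$ (computed out of $A$). Lemma \ref{comix} then gives that $\gamma$ is concave, whence $h$ is semiconcave, so $h$ is DC and $d_B=\max(0,h)$ is DC on $U$ by Lemma \ref{vldc}(i).

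The two positive--reach inputs are the substance of the argument, and the step I expect to be the main obstacle is the interior one: showing that $d_B=\dist(\cdot,\partial A)$ is semiconcave with a constant uniform up to $\partial A$, together with the inequality \eqref{nezlo} for $h$ at boundary points. For a general closed set the semiconcavity constant of $d_B$ is only of order $1/d_B$ and blows up at $\partial A$; the blow-up is caused by $\partial A$ bending toward the complement with large curvature (a reentrant corner of $A$ being the extreme case), and such behaviour destroys the uniqueness of the metric projection from outside, hence is excluded by $\reach A=r>0$. Quantitatively one expects the semiconcavity constant of $d_B$ near $a$ to be at most $\sim 1/r$, dual to the exterior estimate \cite[Proposition 5.2]{CH} behind the semiconvexity of $d_A$. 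Carrying this out rigorously in general dimension, through the convex normal cones $\Nor(A,\cdot)$ of a positive--reach set and the resulting Lipschitz control of the projection onto $\partial A$ from inside, and then verifying \eqref{nezlo}, is where the real work lies; granting these, the bookkeeping above finishes the proof.
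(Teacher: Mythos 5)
Your opening reductions are correct and in fact coincide with the paper's: by \eqref{prdd} and the right-to-left implication of \eqref{eq:boundaryEquivalenceGeneral} it suffices to show $B\in\cal D_d$, and by \eqref{loksem} plus Lemma \ref{vldc}(ii) this localizes to showing $d_B$ is DC on a ball around each boundary point (your identity $d_B=\max(0,h)$ with $h\coloneqq d_B-d_A$ is also valid, since $\R^d=A\cup B$ and $\partial A\subset B$). The problem is that everything after that is precisely what you concede rather than prove. The mathematical substance of the proposition is the statement you label ``the main obstacle'': that the inner distance $d_B=\dist(\cdot,\partial A)$ is semiconcave near $a\in\partial A$ with a constant uniform up to the boundary, controlled by $\reach A$. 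You assert this is ``dual'' to \cite[Proposition 5.2]{CH} and that it should follow from Lipschitz control of normal cones, but no argument is given, and this is not a known citation you can point to in the form you need. Moreover, even granting that estimate, two further steps of your scheme are unproven: (a) condition \eqref{pokrc} of Lemma \ref{comix} needs concave functions \emph{defined on all of $U$} whose graphs cover $\graph\gamma$, and uniform semiconcavity of $\gamma$ on the (generally non-convex) set $\overline U\cap A$ does not automatically yield a concave extension $\phi_{\mathrm{in}}$ to $U$; (b) the inequality \eqref{nezlo} for $h$ at points of $\partial A$ -- one-sided directional derivatives of $d_B-d_A$ at boundary points of a general positive-reach set -- is asserted as ``first-order compatibility'' but never verified. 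As it stands, the proposal is a correct reduction of the proposition to an unproved lemma, not a proof.

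For comparison, the paper closes exactly this gap with a level-set identity that makes all of your deferred analysis unnecessary. Fix $0<r<\reach A$ and set $A_r\coloneqq\{x:\ d_A(x)=r\}$. Then
\begin{equation*}
\dist(x,B)+r=\dist(x,A_r)\qquad\text{for every }x\in\inter A,
\end{equation*}
where ``$\leq$'' uses that each $y\in\partial A$ admits $z\in A_r$ with $|y-z|=r$ (this is where positive reach enters, via \cite[Proposition 3.1 (v),(vi)]{RZ}), and ``$\geq$'' only uses that the segment from $x$ to a nearest point of $A_r$ must cross $\partial A$. Since a boundary point $a$ satisfies $a\notin A_r$, the function $\dist(\cdot,A_r)$ is locally semiconcave, hence DC, on a ball $U(a,\rho)$ by \eqref{loksem}; and on that ball $d_B(x)\in\{0,\ \dist(x,A_r)-r\}$ pointwise, so the ordinary mixing lemma, Lemma \ref{vldc}(iv), gives that $d_B$ is DC there. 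This identity is the rigorous form of the uniform interior semiconcavity you postulated: $d_B$ agrees on $\inter A$ with a function semiconcave on a full neighbourhood of $a$, which also dissolves your extension problem (a) and removes any need for Lemma \ref{comix} or the derivative condition (b). If you wish to salvage your argument along your own lines, proving this identity is the missing lemma to supply.
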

\begin{proof}
	By \eqref{prdd} and \eqref{eq:boundaryEquivalenceGeneral} it is sufficient to prove that $B\in\cal D_d$.
	Since $d_B$ is locally DC on $\R^d\setminus B$ (see \eqref{loksem}) and on $\inter B$ 
	(trivially), by  Lemma \ref{vldc} (ii) it is sufficient
	to prove that
	\begin{equation}\label{lokhr}
	\text{for each $a \in \partial B$ there exists $\rho>0$ such that $d_B$ is DC on $U(a,\rho)$.}
	\end{equation}
	To prove \eqref{lokhr}, choose  $0<r< \reach A$ and denote $A_r\coloneqq  \{x:\ \dist(x,A)=r\}$.
	We will first prove that
	\begin{equation}\label{soucet}
	\dist(x,B) +r =  \dist(x, A_r),\ \ \text{whenever}\ \ x \in \R^d \setminus B = \inter A.
	\end{equation}
	To this end, choose an arbitrary $x \in \inter A$. Obviously, there exists $y\in  \partial B \subset \partial A$ such that $\dist(x,B) = |x-y|$. Since $A$ has positive reach and $y \in \partial A$,
	there exists  $z \in A_r$ such that $|y-z| = r$ (It follows, e.g., from \cite[Proposition 3.1 (v),(vi)]{RZ}). Therefore
	$$ \dist(x, A_r) \leq |x-z| \leq |x-y| + |y-z| = \dist(x,B) +r.$$
	To prove the opposite inequality, choose a point $z^* \in A_r$ such that $\dist(x, A_r) = |x-z^*|$.
	Obviously, on the segment  $[x,z^*]$ there exists a point $y^* \in \partial A \subset B$. Then
	$$  \dist(x, A_r) = |x-z^*| = |x-y^*| + |y^* - z^*| \geq \dist(x,B) +r,$$
	and \eqref{soucet} is proved.
	
	Now let $a \in \partial B \subset \partial A$ be given. Then $a \notin A_r$ and so by \eqref{loksem}
	there exists $\rho>0$ such that $\dist(\cdot, A_r)$ is DC on $U(a,\rho)$. For $x \in U(a,\rho)$,
	$d_B(x) = \dist(x, A_r)-r$ if $x \in \inter A$ (by \eqref{soucet}) and $d_B(x)=0$ if $x \notin \inter A$. Thus  Lemma \ref{vldc} (iv) implies that $d_B$ is DC on $U(a,\rho)$, which proves \eqref{lokhr}.
\end{proof}

Further recall that our main result (Theorem \ref{hlav}) asserts that
\begin{equation}\label{grdd}
\graph g \in \cal D_2\ \ \ \text{whenever}\ \ \ g:\R \to \R\ \ \text{is}\ \ DC.
\end{equation}

 Motivated by a natural question, for which non DC functions $g$ \eqref{grdd} holds,
 we present the following result, whose proof is implicitly contained in the proof
 of \cite[Proposition 6.6]{PRZ}; see Remark \ref{mani} below.
\begin{prop}\label{lidc}
If $g: \R^{d-1} \to \R$ ($d\geq 2$) is locally Lipschitz and $A\coloneqq  \graph g \in \cal D_d$,
 then $g$ is DC.
\end{prop}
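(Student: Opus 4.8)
The plan is to realize $g$ as the solution of a \emph{signed distance equation} that is DC and strictly monotone in the vertical direction, and then to invoke a DC implicit function theorem. Since DC-ness is a local property (Lemma~\ref{vldc}~(ii)), I fix $x_0\in\R^{d-1}$, choose $L>0$ and a ball on which $g$ is $L$-Lipschitz, and reduce to proving that $g$ is DC on a neighbourhood of $x_0$. The first step is to pass from $\graph g$ to its two sides. The set $M:=\epi g$ is closed with $\partial M=\graph g$ and $\overline{\R^d\setminus M}=\hypo g$, so the hypothesis $\graph g=\partial M\in\cal D_d$ together with \eqref{eq:boundaryEquivalenceGeneral} yields $\epi g\in\cal D_d$ and $\hypo g\in\cal D_d$; that is, both $d_{\epi g}$ and $d_{\hypo g}$ are DC on $\R^d$. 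I then form the signed distance $\sigma:=d_{\hypo g}-d_{\epi g}$, which is DC by Lemma~\ref{vldc}~(i). A direct check of the three cases $y>g(x)$, $y=g(x)$, $y<g(x)$, using that the nearest point of an epigraph or hypograph to an exterior point lies on $\graph g$, shows that $\sigma$ equals $d_{\graph g}$ above the graph and $-d_{\graph g}$ below it; in particular $\sigma(x,y)=0$ holds exactly on $\graph g$.

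The second step is to show that $\sigma$ is strictly increasing in the vertical variable with a uniform rate. Using the $L$-Lipschitz cone condition one checks that, for a point $(x,y)$ lying below the graph, the segment joining it to its metric projection onto $\epi g$ makes an angle at most $\arctan L$ with the vertical; consequently the vertical component of $\nabla d_{\epi g}$ has modulus at least $1/\sqrt{1+L^2}$, and the symmetric estimate holds for $d_{\hypo g}$ above the graph. Hence, on a neighbourhood of $(x_0,g(x_0))$, one gets $\sigma(x,y_2)-\sigma(x,y_1)\ge\kappa\,(y_2-y_1)$ for $y_1<y_2$, with $\kappa:=1/\sqrt{1+L^2}>0$, so $y\mapsto\sigma(x,y)$ is strictly increasing and crosses $0$ transversally precisely at $y=g(x)$. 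The same cone estimate gives the two-sided comparison $\kappa\,|y-g(x)|\le d_{\graph g}(x,y)\le|y-g(x)|$ near the graph, which also guarantees that the projections involved stay inside the Lipschitz patch.

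It remains to solve $\sigma(x,g(x))=0$ for $g$ and to conclude that the solution is DC; this is the heart of the matter and is where I expect the main difficulty to lie. It is precisely a \emph{DC implicit function theorem}: if $\sigma$ is DC near $(x_0,g(x_0))$ and strictly increasing in $y$ with rate $\ge\kappa>0$, then the function $g$ determined by $\sigma(x,g(x))=0$ is DC. The mechanism I would use is as follows. Writing $\sigma=p-q$ with $p,q$ convex and, after adding a large multiple of $y$ to both, strictly increasing in $y$ on the neighbourhood, the vertical inverses $\pi,\rho$ defined by $p(x,\pi(x,s))=s$ and $q(x,\rho(x,s))=s$ are \emph{concave} in $(x,s)$, since e.g. $\{(x,s,y):p(x,y)\le s\}=\{(x,s,y):y\le\pi(x,s)\}$ is convex; the graph $y=g(x)$ is then the transversal zero level of $\sigma$ governed by these concave partial inverses. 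This is exactly the statement that $\{\sigma=0\}$ is a DC hypersurface under a uniform transversality condition, and it is the form in which the argument is implicit in the proof of \cite[Proposition~6.6]{PRZ}. Having obtained that $g$ is DC on a neighbourhood of each $x_0$, Lemma~\ref{vldc}~(ii) upgrades this to $g$ being DC on all of $\R^{d-1}$. I emphasize that a purely first-order (BV-gradient) argument would not suffice when $d-1\ge2$, since a locally Lipschitz function with gradient of bounded variation need not be DC; it is the monotone DC implicit function theorem that makes the conclusion valid in every dimension.
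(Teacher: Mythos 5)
Your opening reductions are correct and are genuinely different from what the paper does. The paper's entire proof of Proposition~\ref{lidc} is a precise deferral: by Remark~\ref{mani}, the proof of \cite[Proposition 6.6]{PRZ}, written there for an aura $f_M$ of a WDC set, goes through verbatim once the aura is replaced by the distance function $d_A$. You instead build your own transversal DC function: from $\partial(\epi g)=\graph g$ and \eqref{eq:boundaryEquivalenceGeneral} you correctly deduce $\epi g,\hypo g\in\cal D_d$, the signed distance $\sigma=d_{\hypo g}-d_{\epi g}$ is DC by Lemma~\ref{vldc}~(i), it vanishes exactly on $\graph g$, and your cone estimate giving $\sigma(x,y_2)-\sigma(x,y_1)\ge\kappa(y_2-y_1)$ with $\kappa=1/\sqrt{1+L^2}$ is sound (it can be made rigorous via Dini derivatives along vertical lines, the nearest points staying in the Lipschitz patch). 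In particular every Clarke subgradient $\xi$ of $\sigma$ near $(x_0,g(x_0))$ satisfies $\langle\xi,e_d\rangle\ge\kappa$. This is a nice construction: it replaces the aura by an explicit DC function uniformly increasing in the vertical direction.

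The gap is at the step you yourself identify as the heart: the ``DC implicit function theorem'' is never proved, and the mechanism you propose for it is circular. Concavity of the partial inverses $\pi,\rho$ (which is a correct observation) does not by itself yield $g$: to extract $g$ from them you must solve $\pi(x,s)=\rho(x,s)$ for $s$ and then set $g(x)=\pi(x,s(x))$, i.e.\ you must find the zero, in $s$, of the DC function $\pi-\rho$, which at its zeros is again transversal (strictly monotone in $s$ with a definite rate) --- exactly the type of problem you set out to solve, reproduced in the variables $(x,s)$. So the sketch reduces the statement to itself, and the concluding appeal to ``the form in which the argument is implicit in the proof of \cite[Proposition 6.6]{PRZ}'' is doing all the real work. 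The statement you need is true, but it is precisely the nontrivial level-set machinery of \cite{PRZ} (the results behind Corollary 5.4, quoted in Proposition~\ref{pokr}, in their fixed-axis form: if every Clarke subgradient of a DC function at the relevant zero points satisfies $\langle\xi,v\rangle\ge\ep$ for a fixed direction $v$, then locally the zero set lies on a DC hypersurface with axis $v$). Your $\sigma$ satisfies those hypotheses with $v=e_d$ and $\ep=\kappa$, so your proof can be completed by citing that result and finishing with Lemma~\ref{vldc}~(ii); but with that citation your argument becomes a rearrangement of the paper's, resting on the same borrowed core, and without it the key step is an assertion, not a proof.
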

\begin{remark}\label{mani}
One implication of \cite[Proposition 6.6]{PRZ} gives that if $A$ is as in Proposition \ref{lidc}
 (or, more generally, $A$ is a  Lipschitz manifold of dimension $0<k<d$; see \cite[Definition 2.4]{PRZ} for this notion) and $A$ is WDC, then $g$ is DC (or is a DC manifold of dimension $0<k<d$, respectively). The proof of this implication works with an aura $f=f_M$ of a set $M$, but 
 under the assumption that $A \in \cal D_d$, the proof clearly also works, if we use the distance function $d_A$ instead of $f$. So we obtain not only Proposition \ref{lidc}, but also
 the following more general result.

{\it If $A \subset \R^d$ is a  Lipschitz manifold of dimension $0<k<d$ and $A \in \cal D_d$,
 then $A$ is a DC manifold of dimension $k$.}
\end{remark}

Recall that it is an open question, whether $\graph g\in \cal D_d$, whenever 
  $g:\er^{d-1}\to\er$ is a DC function. However, using Proposition \ref{doplpr},
	 we easily obtain:
	\begin{cor}\label{grsemi}
	If $g:\er^{d-1}\to\er$ is a semiconcave function then $\graph g \in \cal D_d$.
	\end{cor}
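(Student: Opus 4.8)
The plan is to exhibit $\graph g$ as the boundary of a set of positive reach and then quote Proposition \ref{doplpr}. Concretely, I would work with the hypograph
$$A\coloneqq \hypo g=\{(x,t)\in\er^{d-1}\times\er:\ t\le g(x)\}.$$
Since $g$ is semiconcave it is in particular continuous, so $A$ is closed and $\partial A=\graph g$. Hence, \emph{once I know that $A$ has positive reach}, Proposition \ref{doplpr} applied to $A$ gives immediately that $\partial A=\graph g\in\cal D_d$, which is exactly the assertion of the corollary. Thus the whole statement reduces to the single geometric fact that the hypograph of a semiconcave function has positive reach.

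To establish that fact I would exploit the defining structure. Writing $g(x)=a\|x\|^2-\psi(x)$ with $a>0$ and $\psi$ convex, I insert the subgradient inequality $\psi(x)\ge\psi(x_0)+\langle q,\,x-x_0\rangle$ (valid for $q\in\partial\psi(x_0)$) into $g=a\|\cdot\|^2-\psi$ and rearrange to obtain, for every $x_0$, the global paraboloid estimate
$$g(x)\le g(x_0)+\langle 2ax_0-q,\,x-x_0\rangle+a\|x-x_0\|^2,\qquad x\in\er^{d-1}.$$
Geometrically this says that at each boundary point $z_0=(x_0,g(x_0))$ the graph lies below a fixed upward paraboloid tangent at $z_0$ whose second-order term is controlled by the single constant $a$. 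Consequently there is an open ball of radius $r=1/(2a)$, tangent to $\graph g$ at $z_0$ from above, disjoint from $A$; this is a uniform exterior-ball condition, and by Federer's characterisation of sets of positive reach it yields $\reach A\ge 1/(2a)>0$.

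The step I expect to be the main obstacle is precisely the passage from the paraboloid estimate to a genuine uniform exterior ball and thence to $\reach A>0$. The subtlety is that the tilt $2ax_0-q$ of the supporting paraboloid may be unbounded when $\psi$ is not globally Lipschitz, so one must check that this does not shrink the admissible ball radius; in fact a tilted paraboloid is \emph{flatter} in its normal direction, so the normal ball of radius $1/(2a)$ still fits (the untilted case $q=2ax_0$ is the worst one, where a direct computation shows the disk of radius $1/(2a)$ lies above $t=a\|x-x_0\|^2$ everywhere). A cleaner and more robust alternative, which avoids fitting a ball along the whole surface, is to verify positive reach directly via proximal normals: I would show that any $z$ with $\dist(z,A)<1/(2a)$ has a unique nearest point, since a nearest point $z_0\in\graph g$ determines a proximal normal direction along which the above paraboloid estimate forbids a second equidistant point within distance $1/(2a)$. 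In any case this ``semiconcave $\Rightarrow$ hypograph of positive reach'' implication is classical and may simply be cited from the theory of semiconcave functions (e.g.\ \cite{CS}); everything else is the short reduction through Proposition \ref{doplpr} described above.
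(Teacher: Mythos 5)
Your proposal is correct and is essentially the paper's own proof: the paper likewise forms the hypograph $S=\{(a,b)\in\er^{d-1}\times\er:\ b\le g(a)\}$, notes that it has positive reach by citing \cite[Theorem~2.3]{Fu}, and concludes that $d_{\graph g}=d_{\partial S}$ is DC from Proposition~\ref{doplpr}. The only difference is that the paper simply cites Fu's theorem for the positive-reach step that you sketch; of your two sketches, the proximal-normal one is the reliable one, since a uniform exterior tangent ball in merely \emph{some} direction at each boundary point does not in general imply positive reach---one needs it in the direction of every proximal normal, which is exactly what your supergradient paraboloids supply.
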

\begin{proof}
The set $S\coloneqq\{ (a,b)\in\er^{d-1}\times\er: b\leq g(a) \}$
has positive reach by \cite[Theorem~2.3]{Fu} and consequently $d_{\graph g}=d_{\partial S}$ is DC by Proposition~\ref{doplpr}.
\end{proof}

\begin{rem}\label{hrsemi}
	Let $M\subset\er^d$ be a closed set whose boundary can be locally expressed as a graph of a semiconvex function (i.e., for each $a \in \partial M$ there exist 
	 a semiconcave function $g:\er^{d-1}\to\er$, $\delta>0$ and an isometry $\vf: \R^d \to \R^d$
	 such that $\partial M \cap U(a,\delta) = \vf(\graph g) \cap U(a,\delta)$).
	Then $d_{\partial M}$ is locally DC (and therefore DC) by Corollary~\ref{grsemi} and \eqref{loksem} and so $M\in\cal D_d$ by Lemma~\ref{vldc} (iv) and \eqref{eq:boundaryEquivalenceGeneral}.
\end{rem}

%Next, we present several observations concerning stability of classes $\cal D_d$.
%
%
%
%
%Using \eqref{cald1}, it is easy to see that, for  $d=1$,
%\begin{equation}\label{prunik}
%\cal D_d\ \ \text{is closed with respect to finite intersections}.
%\end{equation}
%
%Note also that, for $d=1$, the implication
%\begin{equation}\label{eq:complement}
%A \in \cal D_d \Rightarrow \overline{\R^d \setminus A} \in \cal D_d
%\end{equation}
%holds by \eqref{cald1}.
%
%However, neither \eqref{prunik} nor \eqref{eq:complement} does hold if $d\geq 2$. 
%We will present a counterexample
% in $\R^2$ only, but a similar construction works also in the general case.
% 

%A set $M\subset\er^d$ is called a DC graph if there is a direction $v\in S^1$, a closed convex set $K\subset v^{\bot}$
%and a DCR function $f:K\to \lin (v)$ such that
%\begin{equation*}
%M=\{t+f(t):t\in K\}.
%\end{equation*}

%Let $C\subset \R$ be the Cantor ternary set and $f(x)\coloneqq  (d_C(x))^2,\ x \in \R$. Then 
% $f$ is a DC function by \eqref{odist}. Set $A\coloneqq  \graph f$ and $B\coloneqq  \graph (-f)$. Then
% $A$, $B$ belong to $\cal D_2$ by  Theorem \ref{hlav}. On the other hand, since
% $A\cap B = C \times \{0\}$, we easily obtain $A\cap B \notin \cal D_2$ using \eqref{cald1}. 

Before the next results, we present the following definitions:
we say that a set $A\subset \er^d$ is a DC hypersurface, if there exist a vector $v\in S^{d-1}$ and a DC function (i.e. the difference of two convex functions) $g$ on $W\coloneqq (\spn v)^\bot$ such that $A=\{w+g(w)v: w\in  W\}$.
A set $P\subset\er^2$ will be called a DC graph if it is a rotated copy of $\graph(f|_{I})$ for a DC function $f:\er\to\er$ and some compact (possibly degenerated) interval $\varnothing\not=I\subset\er$.
Note that $P$ is a DC graph if and only if it is a nonempty connected compact subset of a DC hypersurface in $\er^2$.

\begin{prop}\label{pokr}
	Let $d\geq 2$ and $F \in \cal D_d$. Then each bounded set $C \subset \partial F$ can be covered by finitely many DC hypersurfaces.
\end{prop}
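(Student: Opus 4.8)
The plan is to exploit the decomposition $d_F=p-q$ with $p,q$ convex on $\R^d$. Since $d_F\ge 0$ and $d_F=0$ exactly on $F$, we have $p\ge q$ everywhere and $F=\{p=q\}$, so $\partial F\subset\{p=q\}$ and $F$ is the contact set of two convex functions; the properties I will actually use are that $d_F$ is $1$-Lipschitz, that $F=\{d_F=0\}$, and that $d_F$ is locally semiconcave off $F$ by \eqref{loksem}. First I would reduce to covering the compact set $\overline C$ in place of $C$.

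The key geometric observation is a projection trick producing a dense supply of ``regular'' boundary points. If $a\in\partial F$ and $U$ is a neighbourhood of $a$, pick $x\in U\setminus F$ and let $b\in\Pi_F(x)$; then $b\in\partial F\cap U$ when $x$ is close enough to $a$, and, writing $v=(x-b)/|x-b|$, the classical fact that every point of the segment $[b,x]$ has $b$ as its unique nearest point in $F$ gives $d_F(b+tv)=t$ for all $t\in[0,|x-b|]$. Thus regular points---those admitting a direction $v$ and a scale $r>0$ with $d_F(b+tv)=t$ on $[0,r]$---are dense in $\partial F$. For such a point and any $b'\in\partial F$ with $b'-b=sv+w$, $w\perp v$, $0<s\le r$, the $1$-Lipschitz property forces $s=d_F(b+sv)=|d_F(b+sv)-d_F(b')|\le|w|$. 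Hence, on each ball of radius $r/2$, the set $T_{v,r}:=\{b\in\partial F:\ d_F(b+tv)=t,\ t\in[0,r]\}$ obeys a $45^\circ$ cone condition and is therefore a $1$-Lipschitz graph over $(\spn v)^\perp$ in the direction $v$.

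The heart of the proof is to upgrade each such Lipschitz graph to a DC graph. Just ``above'' the graph in the direction $v$ the point lies in $\R^d\setminus F$, where $d_F$ is locally semiconcave by \eqref{loksem}, while the regularity $d_F(b+tv)=t$ gives a uniform lower bound on the one-sided derivative of $d_F$ in the direction $v$. Consequently, for small $\eps>0$, the level sets $\{d_F=\eps\}$ are locally graphs $t=g_\eps(w)$ over $(\spn v)^\perp$; the transversality in the $v$-direction together with the semiconcavity of $d_F$ should yield, by a DC implicit-function argument, that each $g_\eps$ is DC with a convex decomposition whose two pieces are uniformly Lipschitz and carry a uniform semiconcavity constant. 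Letting $\eps\downarrow0$ gives $g_\eps\to g$, and since uniform limits of uniformly Lipschitz convex functions are convex, the limit $g$ is again DC; extending it to a DC function on all of $(\spn v)^\perp$ (a Lipschitz DC function extends, extending the convex pieces separately) shows that the local piece lies in a genuine DC hypersurface.

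Finally I would assemble the cover. Choosing a finite $\theta$-net $v_1,\dots,v_N$ of $S^{d-1}$ and using compactness of $\overline C$ together with the uniform ($45^\circ$) Lipschitz constant, an Arzel\`a--Ascoli argument passes from the local DC graphs to finitely many DC hypersurfaces $H_1,\dots,H_m$ whose union contains all regular points of $\overline C$; as each $H_i$ is closed and the regular points are dense in $\partial F$, the $H_i$ also capture the degenerate boundary points---exactly the cusp-type points admitting no regular direction, such as the tip in $F=\{x\le0\}\cup\{y\ge x^2\}\cup\{y\le -x^2\}$, which is swept up by the closures of the two parabolic graphs. I expect the main obstacle to be precisely this synthesis together with the DC upgrade: proving the locally defined graphs are DC \emph{with uniform bounds}, so that DC-ness survives both the limit $\eps\downarrow0$ and the Arzel\`a--Ascoli extraction, and verifying that the resulting finitely many closed DC hypersurfaces really do contain the degenerate points.
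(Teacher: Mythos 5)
Your proposal bears no resemblance to the paper's proof, and it contains a genuine gap. The paper does not construct the covering hypersurfaces at all: it notes that $f\coloneqq d_F$ is DC and vanishes on $C$, and invokes \cite[Corollary 5.4]{PRZ}, which reduces the statement to verifying that for every $x\in C$ the Clarke gradient $\partial f(x)$ contains an element of norm $>1/4$. This uniform nondegeneracy is checked by contradiction: if $\partial f(x)\subset B(0,1/4)$, then upper semicontinuity of the Clarke gradient gives $\partial f(u)\subset U(0,1/2)$ for all $u\in U(x,\delta)$; choosing $z\in U(x,\delta/2)\setminus F$ and $p\in \Pi_F(z)$, Lebourg's mean value theorem yields $\alpha\in\partial f(u)$ with $\langle \alpha, z-p\rangle=f(z)-f(p)=|z-p|$, forcing $|\alpha|\geq 1$. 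All the hard structural work (finiteness of the cover, DC-ness of the covering surfaces) is outsourced to the cited result.

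The gap in your construction sits exactly at the two steps you flag yourself, and it is fatal rather than technical. Every tool you actually deploy --- the $1$-Lipschitz property of $d_F$, local semiconcavity off $F$ from \eqref{loksem}, the projection identity $d_F(b+tv)=t$, and the resulting cone condition --- is valid for an \emph{arbitrary} closed set $F$. But for arbitrary closed sets the conclusion is false: take $F\subset\R^2$ to be the von Koch curve, which is bounded with Hausdorff dimension $\log 4/\log 3>1$, hence $\Ha^1(F)=\infty$, whereas finitely many DC hypersurfaces meet a bounded set in a set of finite $\Ha^1$ measure. So any correct proof must use the DC hypothesis essentially, and in your sketch it appears only inside the two unproved assertions. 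First, the ``DC implicit-function argument'' giving each $g_\eps$ a DC decomposition with \emph{uniform} constants: the natural semiconcavity constant of the level set $\{d_F=\eps\}$ blows up like $1/\eps$ (the exterior tangent balls have radius $\eps$), so uniformity as $\eps\downarrow 0$ is precisely what fails without some new input from DC-ness, and none is provided. Second, the ``Arzel\`a--Ascoli'' synthesis cannot produce a finite cover: regular points of $\partial F$ carry no uniform lower bound on the scale $r$, so you have infinitely many local graphs at arbitrarily small scales, and equicontinuity arguments yield limits of sequences of graphs, not a finite subfamily whose union still contains all regular points. (Your final density argument --- finitely many closed $H_i$ containing a dense set of regular points must contain the degenerate points too --- is sound logic, but it is contingent on the finite cover you never obtain.) What is missing is exactly a structural theorem about DC functions of the type of \cite[Corollary 5.4]{PRZ}; proving such a statement from scratch is the real content of the proposition, and it does not follow from semiconcavity off $F$ plus compactness.
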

\begin{proof}
	By our assumptions, $f\coloneqq  \dist(\cdot, F)$ is a DC function on $\R^d$ and $f(x)=0$
	for every $x \in C$. So, by \cite[Crollary 5.4]{PRZ} it is sufficient to prove that
	for each $x \in C$ there exists $y^* \in \partial f(x)$ with  $|y^*|> \ep\coloneqq  1/4$, where
	$\partial f(x)$ is the Clarke generalized gradient of $f$ at $x$ (see \cite[p. 27]{C}).
	To this end, suppose to the contrary that $x \in C$ and $\partial f(x)\subset B(0,1/4)$.
	Since the mapping $x \mapsto \partial f(x)$ is upper semicontinuous (see \cite[Proposition 2.1.5 (d)]{C}), there exists $\delta>0$ such that $\partial f(u) \subset U(0, 1/2)$ for each $u \in U(x, \delta)$.
	Since $x \in \partial F$, we can choose $z \in U(x,\delta/2) \setminus F$ and $p \in \Pi_F(z)$.
	Then $p \in U(x, \delta)$, $f(z)- f(p) = |z-p|$ and Lebourg's mean-value theorem
	(see \cite[Theorem 2.3.7]{C}) implies that there exist $u \in U(x,\delta)$ and $\alpha \in
	\partial f(u)$ such that
	$$ \langle \alpha, z-p\rangle = f(z)- f(p) = |z-p|.$$ 
	Therefore  $|\alpha| \geq 1$, which is a contradiction.
\end{proof}

%A set $M\subset\er^2$ is called a DC graph if there is a direction $v\in S^1$, a closed interval, set $K\subset v^{\bot}$
%and a DC function $f:v^{\bot}\to \lin (v)$ such that
%\begin{equation*}
%M=\{t+f(t):t\in K\}.
%\end{equation*}

%We will say that a set $A$ is locally a finite union of DC graphs if for every $x\in A$ there is $r>0$ and a set $M$ which is a finite union of DC graphs such that $A\cap U(x,r)=M\cap U(x,r)$.

The above proposition easily implies the following fact.
\begin{cor}
	If $F\in \cal D_2$ then $\partial F$ is a subset of the union of a locally finite system of DC graphs.
\end{cor}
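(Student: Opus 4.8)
The plan is to combine the covering statement of Proposition~\ref{pokr} with the one-dimensional characterisation \eqref{cald1} and the structural link between DC graphs and DC hypersurfaces noted just before Proposition~\ref{pokr}. First I would fix $F\in\cal D_2$ and apply Proposition~\ref{pokr} to each of the bounded sets $C_k\coloneqq \partial F\cap B(0,k)$, $k\in\en$. This gives, for each $k$, a finite family of DC hypersurfaces in $\er^2$ covering $C_k$; each DC hypersurface is (a rotated copy of) the full graph of a DC function $f:\er\to\er$. Intersecting each such hypersurface with the compact ball $B(0,k)$ and splitting the result into its connected components yields finitely many compact connected subsets of DC hypersurfaces, i.e.\ finitely many DC graphs by the remark that $P$ is a DC graph iff it is a nonempty connected compact subset of a DC hypersurface in $\er^2$. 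Taking the union over all $k$ produces a countable system of DC graphs whose union contains $\partial F$.

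The remaining, and genuinely substantive, point is local finiteness. A naive union over $k$ need not be locally finite, so I would argue as follows. Given a point $p\in\er^2$, I want to show only finitely many graphs from the constructed system meet a small ball $B(p,\varrho)$. The key observation is that each DC hypersurface $A$ produced by Proposition~\ref{pokr} is the graph of a DC function over a fixed line direction, so $A\cap B(p,\varrho)$ is itself a compact subset of a DC hypersurface; by \eqref{cald1} applied in a suitable one-dimensional slice (the components of such an intersection, read off along the base line of the hypersurface, form a locally finite system precisely because the defining function is DC, hence continuous with a locally finite set of "oscillation" features), the intersection has only finitely many connected components inside any fixed ball. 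Thus each of the finitely many hypersurfaces covering $C_k$ contributes only finitely many DC graphs meeting $B(p,\varrho)$.

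To turn this into genuine local finiteness of the whole system I would choose $\varrho$ small enough that $B(p,\varrho)\subset B(0,k_0)$ for some $k_0$, and then reorganise the construction so that the graphs covering the annular pieces $C_{k+1}\setminus \INt C_k$ for large $k$ lie outside $B(p,\varrho)$ altogether. Concretely, I would cover $\partial F$ by applying Proposition~\ref{pokr} on the single bounded set $C_{k_0+1}$ to capture everything inside $B(p,\varrho)$, so that near $p$ the system reduces to the finitely many DC graphs coming from that single application; since $\varrho$ and $k_0$ depend only on $p$, this establishes that every point has a neighbourhood meeting only finitely many members of the system.

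The main obstacle I anticipate is exactly this passage from a countable covering to a \emph{locally finite} one: Proposition~\ref{pokr} only guarantees a finite cover on each bounded set, and the DC graphs it produces may accumulate along $\partial F$ in principle. The crux is to rule out such accumulation within a fixed ball, and for this the right tool is the one-dimensional statement \eqref{cald1}: because each covering set is a subset of a genuine DC graph (not merely a Lipschitz one), the trace of $\partial F$ on it cannot have infinitely many components crowding into a bounded region. I would therefore spend the bulk of the writeup making precise the reduction to a single application of Proposition~\ref{pokr} on a bounded set large enough to contain a neighbourhood of the given point, after which local finiteness is immediate from the finiteness in that one application together with the local finiteness of components guaranteed by \eqref{cald1}.
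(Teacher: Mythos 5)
Your skeleton (chop $\partial F$ into bounded pieces, apply Proposition~\ref{pokr}, convert the resulting DC hypersurfaces into DC graphs, then fight for local finiteness) is the route the paper intends --- it states the corollary as an easy consequence of Proposition~\ref{pokr} --- but two of your steps are genuinely broken. The first is the claim that a DC hypersurface meets the compact ball $B(0,k)$ in \emph{finitely} many connected components, which you support by invoking \eqref{cald1}. That equivalence characterises the subsets of $\er$ whose \emph{distance function} is DC; it says nothing about the parameter set along which a DC graph meets a disk, and the underlying claim is false: a DC function can cross a level infinitely often in a bounded interval. Concretely, let $f=\sum_n\phi_n$, where $\phi_n$ is a piecewise linear tent of height $4^{-n}$ whose support has length $2^{-n}$, the supports being pairwise disjoint subintervals of $(0,\tfrac12)$ accumulating at $0$. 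Then $f'$ has bounded variation (the $n$-th tent contributes variation $O(2^{-n})$), so $f$ is DC, and for $g(x)\coloneqq\sqrt{1-x^2}+f(x)$ (DC on a neighbourhood of $[-\tfrac12,\tfrac12]$, extendable to a DC function on $\er$) one computes $x^2+g(x)^2=1+f(x)\bigl(2\sqrt{1-x^2}+f(x)\bigr)$, so $\graph g$ meets $B(0,1)$ exactly over the set $\{f=0\}$, which has infinitely many components accumulating at $0$. Since $\graph g\in\cal D_2$ by Theorem~\ref{hlav}, the set $F=\graph g$ is a legitimate input to your construction, and already at level $k=1$ your ``finitely many compact connected subsets'' are infinitely many DC graphs accumulating at the point $(0,1)$; so per-level finiteness and local finiteness both collapse. (The cure is not to split into components at all: the parameters of $A_i\cap B(0,k)$ lie in $[-k,k]$, so the graph over that single compact interval is one DC graph containing the intersection.)

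The second gap is that your local-finiteness argument never produces a system: in the last paragraph you re-run the construction for each point $p$ (applying Proposition~\ref{pokr} to $C_{k_0+1}$ with $k_0$ depending on $p$), which shows only that for each $p$ \emph{some} covering is finite near $p$. Local finiteness requires one fixed system that is finite near every point simultaneously, and the statement you would need --- and merely assert when you say one can ``reorganise the construction so that the graphs covering the annular pieces lie outside $B(p,\varrho)$'' --- is the real crux: that $\partial F\cap\bigl(B(0,k+1)\setminus U(0,k)\bigr)$ can be covered by finitely many DC graphs each contained in $\er^2\setminus U(0,k-1)$. This is true but needs an argument, since a hypersurface covering the annular piece may wander back through the inner ball. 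One way: the finitely many covering hypersurfaces are graphs of DC, hence Lipschitz on compacta, functions, and all relevant parameters lie in $[-(k+1),k+1]$; cover the closed annulus by finitely many squares whose diameter is small compared with the reciprocal of the largest of these Lipschitz constants, and for each hypersurface and each square take the graph over the convex hull of the parameters of points of their intersection. Each such piece is a single DC graph contained in a small neighbourhood of its square, hence disjoint from $U(0,k-1)$, and there are finitely many per $k$; the union over $k$ of these families is then a genuinely locally finite system covering $\partial F$. With these two repairs (convex hulls of parameter intervals instead of components, and the annulus containment lemma) your approach goes through.
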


Using Theorem~\ref{hlav} we obtain the following easy result.
\begin{prop}\label{prop:unionOfGraphs}
	If $A\subset\er^2$ is the union of a locally finite system of DC graphs then $A\in \cal D_2$.
%	Let $P_1,\dots,P_N$ be DC graphs,
%	then $P_1\cup \cdots\cup P_N\in \cal D_2$.
\end{prop}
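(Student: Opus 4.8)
The plan is to reduce the statement to a single DC graph and then to a purely local question, disposing of the interior of a graph by Theorem \ref{hlav} and isolating the two endpoints as the genuine difficulty. First I would record that $A$ is closed: each DC graph is compact, and a locally finite union of closed sets is closed. By Lemma \ref{vldc} (ii) it then suffices to prove that $\dist(\cdot,A)$ is locally DC, i.e. DC on a small ball around every $x_0\in\er^2$. Fixing $x_0$ and the given system $\{P_\alpha\}$, local finiteness gives finitely many members $P_1,\dots,P_m$ meeting the ball $B(x_0,\dist(x_0,A)+1)$; for $x\in U(x_0,1/2)$ one has $\dist(x,A)\le\dist(x_0,A)+1/2$, so every metric projection of $x$ onto $A$ lies in that ball, hence in some $P_i$, whence $\dist(x,A)=\min_{1\le i\le m}\dist(x,P_i)$ on $U(x_0,1/2)$. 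By Lemma \ref{vldc} (i) it is therefore enough to prove that $\dist(\cdot,P)$ is DC on $\er^2$ for a single DC graph $P$.

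Since a rotation is an isometry and preserves the DC property of distance functions, I may take $P=\graph(f|_{[a,b]})$ for a DC $f:\er\to\er$; write $z_a=(a,f(a))$ and $z_b=(b,f(b))$, and set $d:=\dist(\cdot,P)$. Again by Lemma \ref{vldc} (ii) it suffices to check that $d$ is DC near each point. On $\er^2\setminus P$ this is \eqref{loksem}. At an interior point $z=(s_0,f(s_0))$ with $s_0\in(a,b)$ I would argue that $d=\dist(\cdot,\graph f)$ on a small ball: for $x$ near $z$ the value $d(x)$ is small, so any projection $(s,f(s))$ of $x$ onto $P$ has $s$ near $s_0$, hence interior, while any point of $\graph f$ whose first coordinate lies outside $(a,b)$ has horizontal distance to $x$ bounded below and so cannot be that close. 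Thus locally $d=\dist(\cdot,\graph f)$, which is DC by Theorem \ref{hlav}.

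It remains to treat the endpoints, say $z_b$ (the case $z_a$ being symmetric), and this is the crux. Near $z_b$ the projections of $x$ onto $P$ use only parameters close to $b$, so $d=\dist(\cdot,H_b)$ on a small ball, where $H_b:=\graph(f|_{(-\infty,b]})$. Writing $f=g-h$ with $g,h$ convex and extending each of $g,h$ beyond $b$ linearly with its own left derivative (which preserves convexity) produces a DC extension $\bar f$ with $\graph\bar f=H_b\cup R_b$, where $R_b$ is the closed ray issuing from $z_b$ tangentially to $H_b$; by Theorem \ref{hlav} the function $\dist(\cdot,\graph\bar f)$ is DC, and $|\cdot-z_b|$ is convex. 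Splitting a small ball $U(z_b,\rho)$ by the line $N$ through $z_b$ normal to the tangent direction $\mathbf r_b$ of $R_b$, the intended picture is: on the backward side of $N$ the tangential continuation recedes from $x$, so $d=\dist(\cdot,\graph f)$; on the forward side the backward curve recedes, the endpoint becomes the nearest point, and $d=|\cdot-z_b|$. Granting these two identities on the corresponding open half-balls, the Mixing Lemma (Lemma \ref{vldc} (iv)), applied to the DC functions $\dist(\cdot,\graph f)$ and $|\cdot-z_b|$, gives that $d$ is DC on $U(z_b,\rho)$, completing the Key Lemma and the proposition.

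The main obstacle is precisely this forward/backward comparison at the endpoint. Away from $N$ the two identities are easy: for $x$ making a definite angle with $\mathbf r_b$ the sign of $\langle x-z_b,\mathbf r_b\rangle$ controls, to first order, whether the tangential continuation approaches or recedes, and the tangency relation $f(u)-f(b)-f'_-(b)(u-b)=o(|u-b|)$ keeps the curve close enough to its tangent line that this first‑order information is decisive. The delicate region is the thin ``waist'' around $N$, where $\langle x-z_b,\mathbf r_b\rangle$ is comparable to the tangency error: there the nearest point may be either the endpoint or an interior point, and deciding between them needs finer, essentially second‑order, control of the projection. I expect this to be the only real difficulty, and it can be resolved either by such a refined estimate or, alternatively, by bypassing the reduction and re‑running the polygonal‑approximation and concave‑mixing argument of Theorem \ref{hlav} directly for the arc $f|_{[a,b]}$: there the two terminal segments of the approximating polygon have convex distance functions and so enter the covering \eqref{pokrc} of Lemma \ref{comix} harmlessly, the angle corrections of Lemma \ref{kofu} being required only at the interior vertices.
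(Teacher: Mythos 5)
Your reduction to a single compact DC graph, and your treatment of points off the arc and of interior points of the arc, are correct and agree with the paper's own proof; the endpoints are indeed where all the work lies. But there your argument has a genuine gap, and it is not one that a ``refined estimate'' can close: the two identities on which your mixing argument rests are \emph{false} for suitable DC functions, at points arbitrarily close to the endpoint. Concretely, let $b=0$, $z_b=(0,0)$, and let $f\le 0$ be piecewise affine on $(-\infty,0]$, equal to $0$ except on ``dips'' accumulating at $0$: for $\tau_k=4^{-k}$ and rapidly decreasing $\delta_k\downarrow 0$, put a dip at $u\approx-\tau_k\delta_k$ of depth $\phi_k\delta_k$ with $\phi_k=\tau_k^{3/2}$. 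The slopes used are of order $\tau_k^{1/2}$, so their total variation is finite ($f$ is DC), and $\phi_k/\tau_k\to 0$ gives $f'_-(0)=0$; thus $R_b$ is the positive $u$-axis and $N$ the vertical axis. Test the points $x_k=(\eta_k\delta_k,\,-\delta_k)$ with $\eta_k=\tfrac12\tau_k^{1/2}$, which lie on the forward side of $N$ and tend to $z_b$ along $N$. Clearly $\dist(x_k,R_b)=\delta_k$, and the two conditions ``the dip bottom $(-\tau_k\delta_k,-\phi_k\delta_k)$ is strictly closer to $x_k$ than $z_b$ is'' and ``it is strictly farther from $x_k$ than $\delta_k$'' read, after dividing by $\delta_k^2$,
\[
2\eta_k\tau_k+\tau_k^2+\phi_k^2\;<\;2\phi_k\;<\;\eta_k^2+2\eta_k\tau_k+\tau_k^2+\phi_k^2 ,
\]
and $\phi_k=\tau_k^{3/2}$ lies in this window (of width $\eta_k^2=\tau_k/4$) for all large $k$; the same estimate with room to spare shows every other point of the curve is at distance $>\delta_k$ from $x_k$ (flat parts trivially, the other dips by the separation of scales). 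Hence
\[
\dist\bigl(x_k,\graph\bar f\bigr)=\dist(x_k,R_b)=\delta_k\;<\;d(x_k)\;<\;|x_k-z_b| ,
\]
so $d(x_k)$ equals \emph{neither} of your two functions. Both half-ball identities fail, the weaker inclusion $d(x)\in\{\dist(x,\graph\bar f),\,|x-z_b|\}$ needed for Lemma \ref{vldc} (iv) fails on every ball $U(z_b,\rho)$, and since the bad points approach $z_b$ along $N$ itself, no rotation of the splitting line helps. Tangency is exactly the enemy: the curve may return to within $o(|u|)$ of its tangent ray at all scales, and then in the waist neither the endpoint nor the tangent extension computes $d$.

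Your fallback route is also misdiagnosed. In a rerun of the proof of Theorem \ref{hlav} for an arc, the terminal \emph{segments} are indeed harmless, but the terminal \emph{vertices} are not: locally, the set of points projecting onto an endpoint $z_0$ of the polygon is a half-plane, i.e.\ an angle of measure $\pi$, on which $d_n=|\cdot-z_0|$. Lemma \ref{kofu} is unavailable there (it requires $\alpha<\pi$, and its constant $\sqrt2\tan(\alpha/2)$ blows up as $\alpha\to\pi$), and without any compensation hypothesis \eqref{nezlo} of Lemma \ref{comix} already fails at $z_0$: taking $v$ to be the unit tangent of the first segment, $(d_n)'_+(z_0,-v)+(d_n)'_+(z_0,v)=1+0>0$, while $\xi_n$ contributes exactly $0$ in this direction pair and $\eta_n$ cannot be relied on (its Lipschitz constant $M$ may be arbitrarily small); the covering hypothesis \eqref{pokrc} fails on the half-plane region for the same reason. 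This defect is repairable---the two endpoints do not move with $n$, so one may add to $c_n$ a compensator such as $-2|\cdot-z_0|-2|\cdot-z_n|$ at the cost of a fixed increase of the Lipschitz bound---but that compensation is precisely the missing idea, not a harmless detail. The paper's actual proof avoids tangent extensions altogether: near the endpoint of $A=\graph f|_{[0,p]}$ it extends $f$ across $0$ by two rays of slopes $\pm2L$, \emph{strictly steeper} than a Lipschitz constant $L$ of $f$ on $[0,p]$, caps the far endpoint by a horizontal ray, and exhibits four explicit regions (cut out by lines of slopes $\pm1/L$ and $\pm1/(2L)$) on which the relevant distance function coincides exactly with $0$, $\dist(\cdot,\graph f_+)$, $\dist(\cdot,\graph f_-)$, or $|\cdot|$; it is the slope gap between $2L$ and $L$ that makes these identities exact, and this gap is exactly what your tangent ray lacks.
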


\begin{proof}
First note that it is enough to prove that any DC graph $P$ belongs to $\cal D_2$. 
Indeed, if $M$ is a locally finite system of DC graphs and each DC graph belongs to $\cal D_2$, then $d_M$ is locally DC  by \eqref{sjedno} (and so DC) and $M\in\cal D_2$.

So assume that $A$ is a DC graph.
Without any loss of generality we may assume that $A=\graph f|_{[0,p]}$ for some DC function $f:\er\to\er$. If $p=0$ then $d_A=|\cdot|$ is even convex, so assume that $p>0$. We may also assume that $f(0)=0$.

First note that (by Theorem~\ref{hlav} and \eqref{loksem}) $d_A$ is locally DC on $\er^2\setminus\{(0,0),(p,f(p))\}$.
It remains to prove that $d_A$ is DC on some neighbourhood of $(0,0)$ and $(p,f(p))$. We will prove only the case of the point $(0,0)$, the other case can be proved quite analogously.
By Lemma \ref{vldc} (iii) we can choose $L>0$ such that $f$ is $L$-Lipschitz
on $[0,p]$.
Define
\begin{equation*}
f_\pm(x)\coloneqq
\begin{cases}
f(x)& \text{if}\quad 0\leq x \leq p,\\
f(p)& \text{if}\quad p<x,\\
\pm 2Ly& \text{if}\quad x<0.
\end{cases}
\end{equation*}

It is easy to see that both $f_+$ and $f_-$ are continuous and so they
are DC by Lemma \ref{vldc} (iv).

Put 
\begin{equation*}
M_0\coloneqq \left\{ (u,v)\in\er^2: u\geq0, \; v=f_+(u)\right\},
\end{equation*}
\begin{equation*}
M_1\coloneqq \left\{ (u,v)\in\er^2: u\geq0, \; f_+(u)< v\right\}\cup
\left\{ (u,v)\in\er^2: u<0, \; -\frac{u}{2L}< v\right\},
\end{equation*}
\begin{equation*}
M_2\coloneqq \left\{ (u,v)\in\er^2: u\geq0, \; \tilde f_-(u)> v\right\}\cup
\left\{ (u,v)\in\er^2: u<0, \; \frac{u}{2L}> v\right\}
\end{equation*}
and
\begin{equation*}
M_3\coloneqq 
\left\{ (u,v)\in\er^2:\frac uL< v< -\frac uL\right\}.
\end{equation*}
Clearly $\er^2= M_0\cup M_1\cup M_2\cup M_3$ and  $M_1$, $M_2$, $M_3$ are open.

Set  $\tilde d\coloneqq\dist(\cdot, M_0)$ and, for each $y \in \R^2$, define
$$d_0(y)=0,\ \  d_1(y)\coloneqq \dist(y,\graph f_+),\ \
d_2(y)\coloneqq \dist(y,\graph f_-),
\ \ d_3(y)\coloneqq |y|.$$
Functions $d_1$ and $d_2$ are DC on $\er^2$ by Theorem~\ref{hlav}, $d_0$ and $d_3$ are even convex on $\er^2$.

Using (for $K= 1/L, -1/L, 1/(2L), -1/(2L)$) the facts that the lines with the slopes $K$ and $-1/K$
are orthogonal and $M_0\subset \{(u,v): u\geq 0,\; -Lu\leq v \leq Lu \}$, 
easy geometrical observations show that
\begin{equation}\label{mjdt}
\tilde d(y)= d_i(y)\ \ \text{ if}\ \  y \in M_i,\  0\leq i \leq 3,
\end{equation}
and so Lemma \ref{vldc} (iv)  implies that $\tilde d$ is DC.
To finish the proof it is enough to observe that $d_A=\tilde d$ on $U(0,\frac{p}{2})$.
\end{proof}

%\begin{rem}
%	Note that many examples on sets from $\cal D_2$ with empty interior are of the form $P_1\cup \cdots\cup P_N$ for some DC graphs $P_1,\dots,P_N$.
%	However, the next example shows that not all sets in $\cal D_2$ with empty interior are of this form.
%\end{rem}

However, the following example shows that the opposite implication does not hold even for nowhere dense sets $A$.

\begin{ex}
	There is a  nowhere dense set $A\in {\cal D}_2$ which is not the union of a locally finite system of DC graphs.
\end{ex}
\begin{proof}
	Define 
	$$\text{$f(x)= \max(x^5,0)$, $x \in \R$, $g(x)=x^5\cos\frac{\pi}{x}$, $x \in \R$, and 
	$g_k\coloneqq  g\restriction_{[\frac{1}{2k+1},\frac{1}{2k}]}$, $k \in \en $.}$$ 
	Put $A^{\pm}\coloneqq \graph (\pm f)$, $A_k\coloneqq \graph g_k$, $k\in\en$, and
	\begin{equation*}
	A\coloneqq A^+\cup A^-\cup \bigcup_{k\in \en} A_k.
	\end{equation*}
	$A$ is clearly closed and nowhere dense, and it 
	   is not the union of a locally finite system of DC graphs since every DC graph $B\subset A$ can intersect at most one of the sets $A_i$.
	It remains to prove that $A\in {\cal D}_2$.	
		
First we will describe all components of $\R^2 \setminus A$. To this end, for each $k \in \en$, define

\begin{equation*}
	U_0(x)=
	\begin{cases}
	g(x), &\; x\in [1/3,1/2],\\
	f(x), &\; x\in [1/2,\infty)
	,
	\end{cases},
	\quad
	U_k(x)=
	\begin{cases}
	g(x), &\, x\in \left[\frac{1}{2k+3},\frac{1}{2k+2}\right],\\
	f(x), &\, x\in \left[\frac{1}{2k+2},\frac{1}{2k}\right],
	\end{cases}
	\end{equation*}
	
	\begin{equation*}
	L_0(x)=
	-f(x),  x\in [1/3,\infty),
	\quad
	L_k(x)=
	\begin{cases}
	-f(x), &\, x\in \left[\frac{1}{2k+3},\frac{1}{2k+1}\right],\\
	g(x), &\, x\in \left[\frac{1}{2k+1},\frac{1}{2k}\right].
	\end{cases}
	\end{equation*}
	Set $G_k\coloneqq  \{(x,y):\ L_k(x) < y <U_k(x)\}$, $ k=0,1,2,\dots$. Then
	$$ G^+\coloneqq   \{(x,y):\ f(x) < y \}, G^-\coloneqq   \{(x,y):\ y< -f(x)  \}\ \text{and}\  G_0,G_1,\dots$$
	 are clearly all components of $\R^2 \setminus A$.
	
	Recall that both $U_k$ and $L_k$ is defined on $D_k$, where $D_k= [\frac{1}{2k+3}, \frac{1}{2k}]$
	 for $k\in \en$ and $D_0= [1/3, \infty)$. Using the facts that $D_k$ and $D_{k+2}$ are disjoint
	 ($k=0,1,\dots$),
	$$ U_k\left(\frac{1}{2k+3}\right)= L_k\left(\frac{1}{2k+3}\right) = g\left(\frac{1}{2k+3}\right),\ 
	U_k\left(\frac{1}{2k}\right)= L_k\left(\frac{1}{2k}\right) = g\left(\frac{1}{2k}\right)$$
	and $U_0(1/3)= L(1/3)=g(1/3)$, it is easy to see that there exist unique functions
	$U$, $\tilde U$ which are continuous on $\R$, $U$ (resp. $\tilde U$) extends all
	 $U_k$, $k=0,2,4,\dots$ (resp. $k=1,3,5,\dots$) and $U$ (resp. $\tilde U$) equals to $g$
	 at all points at which no $U_k$, $k=0,2,4,\dots$ (resp. $k=1,3,5,\dots$) is defined.
	Quite analogously  a continuous function $L$ (resp. $\tilde L$) extending all
	$L_k$, $k=0,2,4,\dots$ (resp. $k=1,3,5,\dots$) is defined. Since the functions
	 $g$, $f$, $-f$ are DC,
	 Lemma \ref{vldc} (iv) implies that 
	the functions $U$, $\tilde U$, $L$, $\tilde L$ are DC. So Theorem \ref{hlav} implies that
	 the distance functions
	\begin{equation}\label{vzdal}
	d_{A^+},\  d_{A^-},\ d_{\graph U},\ d_{\graph \tilde U},\  d_{\graph L},\ d_{\graph \tilde L}
	\end{equation}
	 are DC.
	
	Obviously $d_A(x)=0$ for $x\in A$, $d_A(x)=d_{A^+}(x)$ for $x\in G^+$ and
	 $d_A(x)=d_{A^-}(x)$ for $x\in G^-$. Further, if $x \in G_k$ with $k=2,4,6,\dots$, then
	$$d_A(x) \in \{  d_{\graph U}(x), d_{\graph L}(x)\},$$ 
	 which easily follows from the facts that 
	$$ \partial G_k \subset  (\graph U \cup \graph L)\ \ \text{ and}\ \
	 (\graph U \cup \graph L) \cap G_k = \varnothing.$$
	 Similarly we obtain that,  if $x \in G_k$ with $k=1,3,5,\dots$, then
	$$d_A(x) \in \{ d_{\graph \tilde U}(x), d_{\graph \tilde L}(x)\}.$$
	 Thus, using \eqref{vzdal} and  Lemma~\ref{vldc} (iv), we obtain that
	 $d_A$ is DC.
\end{proof}

It seems that there does not exist an essentially simpler example. Iterating the construction of the example we can obtain nowhere dense sets in $\cal D_2$ of quite complicated topological structure.

In our opinion, using Proposition~\ref{pokr} and Theorem~\ref{hlav} it is possible to give an optimal complete characterisation of sets in $\cal D_2$, but it appears to be a rather hard task.
We believe that we succeeded to find some characterisation, however, it is not quite satisfactory and our current proof is very technical.
We aim to find a better characterisation, hopefully with a simpler proof.

%We believe we have a full characterisation of sets in $\cal D_2$, unfortunately, it is quite complicated and not very satisfactory and the proof is very technical, We aim to find more satisfactory characterisation, hopefully with a simpler proof, in the future.

\end{document}